\newtheorem{thm}{Theorem}%[section]
\newtheorem{corollary}[thm]{Corollary} 
 \newtheorem{lemma}[thm]{Lemma}
\theoremstyle{definition}
\newtheorem{defn}[thm]{Definition}
\newtheorem{rem}{Remark}
\def \no#1#2#3 {{\bf #1} (#3), #2.}
\def \eds#1#2#3 {#1, #2, #3.}
\def\R{{\mathbb R}}
\def\d{{\rm d}}
\def\N{{\mathbb N}}
\def\:{{\colon}}
\def\be#1{\begin{equation}\label{#1}}
\def\ee{\end{equation}}
\def\<{\langle}
\def\>{\rangle}
\def\coloneqq{:=}
\newcommand{\na}{\nabla}
\newcommand{\wo}{\widetilde{\omega}}
\newcommand{\lec}{\lesssim}
\newcommand{\gec}{\gtrsim}
\newcommand{\bs}{\begin{split}}
\newcommand{\essss}{\end{split}}
\renewcommand{\lec}{\lesssim}
\renewcommand{\div}{\operatorname{div}}
\newcommand{\eqnb}{\begin{equation}}
\newcommand{\eqne}{\end{equation}}
\renewcommand{\ee}{\mathrm{e}}
\newcommand{\p}{\partial}
\renewcommand{\tt}{\tau}
\newcommand{\im}{\mathrm{Im}}
\renewcommand{\R}{\mathbb{R}}
\newcommand{\CC}{\mathbb{C}}
\renewcommand{\d}{\mathrm{d}}
\newcommand{\supp}{\operatorname{supp}}
\newcommand{\diam}{\operatorname{diam}}
\newcommand\blfootnote[1]{%
  \begingroup
  \renewcommand\thefootnote{}\footnote{#1}%
  \addtocounter{footnote}{-1}%
  \endgroup
}
\begin{document}
\title[Instantaneous gap loss of regularity for 2D Euler]{Instantaneous gap loss of Sobolev regularity for the 2D incompressible Euler equations}
\author{Diego C\'ordoba,  Luis Mart\'inez-Zoroa,  Wojciech S. O\.za\'nski} 
%\author{}
%\date{\vspace{-5ex}}
\maketitle
\blfootnote{\noindent D.~C\'ordoba: Instituto de Ciencias Matem\'aticas, 28049 Madrid, Spain, email: dcg@icmat.es\\
L.~Mart\'inez-Zoroa: Instituto de Ciencias Matem\'aticas, 28049 Madrid, Spain, email: luis.martinez@icmat.es\\
W.~S.~O\.za\'nski: Department of Mathematics, Florida State University, Tallahassee, FL 32301, USA, \\
and Institute of Mathematics, Polish Academy of Sciences, 00-656 Warsaw, Poland, email: wozanski@fsu.edu}

\begin{abstract}
    We construct solutions of the 2D incompressible Euler equations in $\R^2\times [0,\infty)$ such that initially the velocity is in the super-critical Sobolev space $H^\beta$ for $1<\beta<2$, but are not in $H^{\beta'}$ for $\beta'>1+\frac{(3-\beta)(\beta-1)}{2 - (\beta-1)^2}$ for $0<t<\infty$. These solutions are not in the Yudovich class, but they exist globally in time and they are unique in a~determined family of classical solutions.
    
\end{abstract}

\section{Introduction}

 We consider the incompressible Euler equations 
\begin{eqnarray}\label{Euler}
\p_t v + (v\cdot\nabla) v + \nabla P = 0,\\
\div\, v = 0 \nonumber
\end{eqnarray}
in $\R^d \times \R_+$, with $d=2,3$, where $v(x,t)=(v_1(x,t),.., v_d(x,t))$ is the velocity field and $P=P(x,t)$ is the pressure function. In this paper we study ill-posedness of the initial value problem for (\ref{Euler}) with a given initial data $v_0(x)=v(x,0)$.

In order to illustrate the ill-posedness phenomena, we first note that the classical theory of the Euler equations goes back to the work of Lichtenstein \cite{Lich} and Gunther \cite{Gunther}, who showed local well-posedness in $C^{k,\alpha}$ ($k\geq 1$, $\alpha\in(0,1)$). This was extended to global-in-time well-posedness in the $2$D case  by Wolibner \cite{wol} and H\"older \cite{Hol}.
In the case of Sobolev spaces, Ebin and Marsden \cite{EM} proved, in a~compact domain, local well-posedness in $H^s$ for $s>\frac{d}{2} + 1$, and  Bourguignon and Brezis \cite{BB} have generalized it to the space $W^{s,p}$ for $s>\frac{d}{p} + 1$. Moreover, Kato \cite{K} extended the local well-posedness to $\R^d$ for initial data $u_0$ in $H^s$ for $s>\frac{d}{2} + 1$, see the extension to the $W^{s,p}$ spaces due to Kato and Ponce \cite{KP}.  

Remarkably, in the $2$D case these local-in-time results can be easily extended for all times using the Beale-Kato-Majda criterion \cite{BKM}, since the vorticity is transported by the flow. The optimal bound for growth was obtained by Kiselev and \v{S}ver\'ak \cite{KS} in a~disk, see also the work by Zlato\v{s} \cite{Z} and the lecture notes \cite{Ki} by Kiselev for further results. 

 Moreover, it can be shown that the equations are not well-posed in some spaces, such as integer $C^k$ spaces ($k\geq 1$). This  was recently demonstrated by Bourgain and Li \cite{Bourgaincm}, and independently by  Elgindi and Masmoudi \cite{Elgindi}, who  showed strong ill-posedness and non-existence of uniformly bounded solutions for the initial velocity $v_0$ in $C^k$. Furthermore, nonexistence of uniformly bounded solutions in the critical Sobolev space $H^{\frac{d}{2} + 1}$ was established in another work of Bourgain and Li \cite{Bourgainsobolev}. Subsequently, Elgindi and Jeong  \cite{Elgindisobolev} obtained analogous results with a~different approach, and Jeong \cite{Injee} gave a~simpler proof and similar results for the critical space $W^{s,p}$. Recently, Kwon proved in \cite{Kwon} that there is still strong ill-posedness in $H^2$ for a~regularized version of the 2D incompressible Euler equations. We also refer the reader to Misio\l{}ek and Yoneda \cite{MY} for a~proof of a~nonexistence result in critical Besov spaces in $d=3$.  
 
 These results gave the first methods of studying ill-posedness and nonexistence of solutions to the Euler equations. Moreover, subsequently Elgindi \cite{Elgindi2} proved a~remarkable result on singularity formation of the 3D axisymmetric Euler equations without swirl for $C^{1,\alpha}$ velocity, where $\alpha >0$ is sufficiently small, and Elgindi, Ghoul, and Masmoudi \cite{Elgindi3} extended it to the finite energy case. We also refer the reader to the work of Chen and Hou \cite{Hou}, who provided evidence of a~possibility of nearly self-similar blow near a~boundary, as well as their subsequent impressive work \cite{Hou2}.

In the case of supercritical Sobolev spaces DiPerna and Lions \cite{DPL} show that for $d=3$ and for every $p\geq 1$, there exists a~shear flow solution to (\ref{Euler}) with $v_0\in W^{1,p}$ and $v(x,t)\not\in W^{1,p}$ for $t>0$. Using the structure of shear flows Bardos and Titi \cite{BT} showed the instantaneous loss of smoothness of weak solutions for the $3$D Euler equations with initial data~in the Holder space $C^{\alpha}$ with $\alpha\in(0,1)$. Note that these constructions rely strongly in the $2 + \frac12$ dimensional structure of the shear flows. At this point is worth mentioning the ground-breaking work of De Lellis and Sz\'ekelyhidi Jr. \cite{DL,DL2}, where they show non-uniqueness of solutions in $L^2$ by the method of convex integration (see also the work of Wiedemann \cite{Wi}). Very recently, using similar tools, Khor and Miao \cite{KM} use the method of convex integration to construct infinitely many distributional 3D solutions  in $H^{\beta}$ for $0<\beta << 1$ which has an instantaneous gap loss of Sobolev regularity. \\

From now on in the present work we will focus in solutions with sufficient regularity in the two dimensional case  and  use the vorticity formulation, which is obtained by taking the $\mathrm{curl}$ of the first equation of (\ref{Euler}) and denoting the scalar function (vorticity) by $\omega \coloneqq \mathrm{curl}\,v= \p_1 v_2 - \p_2 v_1 $, where $\p_1$, $\p_2$ denote partial derivatives with respect to $x_1$, $x_2$, respectively. The equation for the vorticity reads

\eqnb\label{vorticity}
\p_t  \omega  + v \cdot \nabla \omega= 0.
\eqne
According to the Biot-Savart law, there is a~stream function $\psi$ such that $v=(-\p_2 \psi , \p_1 \psi )$ and $-\Delta \psi = \omega$
which gives that $v[\omega]= -\Delta^{-1} \nabla^{\perp} \omega$, where $\nabla^{\perp} \coloneqq (-\p_2, \p_1)$. Thus the velocity field $v$ can be expressed as
\begin{equation}\label{bs_law}
    v[\omega](x,t) = \frac{2}{\pi}\int_{\R^2} \frac{(x-y)^{\perp}\omega(y,t)}{|x-y|^{2}}dy
\end{equation}
where $(x_1, x_2)^{\perp} \coloneqq  (-x_2, x_1)$, although we will ignore the factor $\frac{2}{\pi}$ in our computations since both velocities produce the exact same qualitative behaviour.

In \cite{Y} Yudovich proved the existence and uniqueness of weak solutions for bounded vorticity in a~bounded domain. This statement can be extended to $\R^2$ for solutions such that $\omega\in L^1 \cap L^{\infty}$ (see discussions in \cite{MB} and \cite{Camillo+}). Very recently Vishik  \cite{V1,V2} showed that, although there is existence of solutions with a~force source, the uniqueness fails if $L^{\infty}$ is substituted by $L^{p}$ with $p < \infty$ (see also \cite{Camillo+}).

The main result in this paper is to construct unique solutions of the 2D incompressible Euler equations (in vorticity formulation) in $\R^2\times [0,\infty)$  with initial vorticity in the super-critical Sobolev space $H^\beta$, $0<\beta<1$, which, at each time $t>0$, does  not belong to any $H^{\beta'}$, where  
\eqnb\label{betas}
\beta'>\frac{(2-\beta)\beta}{2 - \beta^2}.
\eqne
Moreover these solutions are not in the Yudovich class but are the unique classical solution in the sense given by Definition~\ref{classol}.

We note that the only result to-date in the  direction of proving instantaneous loss of regularity for 2D Euler in the supercritical regime with velocity $v(t) \in H^1$ for all $t\geq 0$ is the result of Jeong \cite{jeong}, who constructed  solutions to the 2D Euler equations which belong to the Yudovich class but the derivative of the vorticity  loses integrability continuously in time, i.e. $\omega\notin W^{1,p(t)}$, with $p(t)$ decreasing continuously in $t$, $1\leq p(0)<2$. In fact, it is shown in \cite{Elgindisobolev} that for this regularity the solution cannot have  a~jump in the regularity class. Moreover, an instantaneous loss of supercritical Sobolev norms was obtained previously in \cite{Zoroacordoba} for the 2D surface quasi-geostrophic equation (SQG), which is a~more singular active scalar equation. Furthermore, Alberti, Crippa and Mazzucato \cite{ACM} show a~gap loss of Sobolev regularity for a~passive scalar that is driven by a~non-Lipschitz incompressible velocity field, see also \cite{CEIM}. We also note a result of gap loss of Sobolev regularity in the context of the super-critical wave equation, due to Lebeau \cite{Lebeau}.

\subsection{Main results}

We are interested in showing loss of regularity for solutions with vorticity $\omega\in H^{\beta}$, but as the first step we will prove that there are initial conditions $\omega_{0}\in C_c^{\infty}$ that are not big in $H^{\beta}$ but become arbitrarily big in $H^{\beta'}$ for $\beta'$ as in \eqref{betas}.

\begin{thm}[Norm inflation for smooth data.]\label{illpos}
Given $T,K>0$, $\beta\in(0,1)$ and $\beta'>\frac{(2-\beta)\beta}{2-\beta^2}$, there exists a~finite energy initial condition $\omega_{0}\in C_c^{\infty}$ with $\|\omega_{0}\|_{H^{\beta}}\leq 1$ such that the unique global-in-time classical solution $\omega$ to the 2D Euler equations with initial condition $\omega_0$ fulfils $\|\omega\|_{H^{\beta'}}\geq K$ for $t\in [\frac{1}{T},T]$.
\end{thm}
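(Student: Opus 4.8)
The plan is to construct $\omega_0$ as a finite superposition of well-separated, highly localized building blocks, each of which is a suitably rescaled and rotated copy of a single "pseudo-solution" profile whose evolution under the Euler flow we can track explicitly over a short time. The key mechanism is shear-induced filamentation: a localized vorticity patch placed in a background shear flow gets stretched, so that while its $L^2$ norm is preserved (vorticity is transported), the gradient — and more generally the $H^{\beta'}$ norm — grows like a power of the stretching factor. Concretely, I would fix a small spatial scale $\lambda$ and amplitude $a$, take a bump $\phi$ supported in a unit box, and set the $k$-th block to be $a\,\phi$ rescaled to scale $\lambda$, with amplitude and scale chosen (using the scaling of the Sobolev norms) so that $\|\omega_0\|_{H^\beta}\le 1$ but the $H^{\beta'}$ norm of each block after time $t$ is forced to be large because the exponent $\beta'>\frac{(2-\beta)\beta}{2-\beta^2}$ sits strictly above the threshold dictated by the competition between how fast the block shears and how small its initial $H^\beta$ content must be. Summing many such blocks placed at disjoint locations makes the total $H^{\beta'}$ norm exceed $K$.

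The main technical device is a stability/approximation lemma: the true Euler solution starting from $\omega_0$ stays close (in a strong enough norm, say $L^2$ of the vorticity together with control of the velocity gradient) to an explicitly constructed approximate solution built from the shearing ansatz, on the time interval $[\tfrac1T,T]$. This is where the bulk of the work lies. The approximate solution is obtained by: (i) linearizing the Biot–Savart law around each block to extract the dominant self-induced shear, (ii) solving the resulting linear transport equation exactly (shear flows are explicitly integrable), and (iii) estimating the error — the difference between the nonlinear advection and the frozen-shear advection — in the relevant norm via a Gronwall argument. Because the blocks are separated and decaying, their mutual interaction is a lower-order, smooth perturbation that can be absorbed. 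The careful bookkeeping is to make all the smallness/largeness requirements ($\|\omega_0\|_{H^\beta}\le1$, $\|\omega(t)\|_{H^{\beta'}}\ge K$ for all $t\in[\tfrac1T,T]$, error $\to 0$) simultaneously satisfiable; this forces a particular relation among $\lambda$, $a$, the number of blocks $N$, and the time, and it is exactly the algebraic inequality $\beta'>\frac{(2-\beta)\beta}{2-\beta^2}$ that guarantees a consistent choice exists.

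The hardest part, I expect, is step (iii): controlling the error in the vorticity equation over the whole interval $[\tfrac1T,T]$ rather than just for infinitesimal time. Naively, a patch at scale $\lambda$ shears on timescale $\sim (a)^{-1}$ (inverse of the velocity gradient it generates), and keeping the true solution slaved to the shear ansatz for an $O(1)$ time — while the filament thins to exponentially or polynomially small scales — requires either iterating the construction on a sequence of shrinking sub-blocks (a multi-scale / self-similar-in-time cascade) or a delicate choice of parameters so that the filamentation saturates at a controlled scale before the error builds up. I would handle this by making the construction time-dependent: at each dyadic time $t_j$ the "active" block is at a scale $\lambda_j\to 0$, so that at every $t\in[\tfrac1T,T]$ some block is in its active shearing window contributing a large $H^{\beta'}$ norm, while all others are either not yet active (still smooth at their initial scale) or already past their window (thin filaments whose $H^{\beta'}$ norm, though large, only helps). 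Uniqueness of the classical solution — needed so that "the" solution is the one we constructed — follows from the earlier Definition~\ref{classol} framework together with the fact that $\omega_0\in C_c^\infty$ yields a unique global smooth solution by the 2D theory (Wolibner/Hölder, Beale–Kato–Majda), so this point is essentially free.
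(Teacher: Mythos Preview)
Your high-level plan---build an approximate solution governed by an explicit shear, then control the error by Gronwall---matches the paper's strategy, but the concrete mechanism you propose has a genuine gap. A single localized bump advected by its \emph{own} Biot--Savart velocity does not admit an explicit pseudo-solution: near its center the self-induced velocity field is, to leading order, a rigid rotation rather than a shear, and beyond that order the deformation is fully nonlinear and cannot be tracked by hand for $O(1)$ times. Your step (i), ``linearize the Biot--Savart law around each block to extract the dominant self-induced shear,'' therefore does not actually produce a shear, and without an explicit shearing ansatz the rest of the argument has nothing to anchor to. There is also no reason the specific threshold $(2-\beta)\beta/(2-\beta^2)$ should emerge from a one-parameter family of self-shearing bumps.

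The paper sidesteps this by \emph{separating} the shear generator from the object being sheared. The initial data is not a sum of bumps but
\[
\omega_0(r,\alpha)=\lambda^{1-\beta}f(\lambda r)+\lambda^{1-\beta}N^{-\beta}g(\lambda r)\cos(N\alpha),
\]
where the radial piece $\lambda^{1-\beta}f(\lambda r)$ is an \emph{exact stationary solution} whose velocity is purely angular with $\partial_r(v_\alpha/r)\sim\lambda^{2-\beta}$, and the oscillatory piece is what gets sheared. Because the background never moves, the pseudo-solution is globally explicit: $\cos(N\alpha)\mapsto\cos\bigl(N(\alpha-t\,v_\alpha/r)\bigr)$, acquiring effective radial frequency $\sim Nt\lambda^{2-\beta}$. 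The second crucial ingredient, absent from your sketch, is the high angular frequency $N$ together with $2\pi/N$-periodicity: this forces the radial velocity generated by the oscillatory piece to be $O(N^{-1}\log N)$ (its self-interaction nearly averages out), which is precisely what lets the error estimate close over an $O(1)$ time interval. The resulting constraint $N^\beta\sim\lambda^{2-2\beta}$ between the \emph{two} parameters is what produces the threshold $(2-\beta)\beta/(2-\beta^2)$; with a single-scale bump there is neither a second parameter nor an analogue of this cancellation.

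Finally, your multi-block, multi-time cascade is unnecessary for this statement: a single configuration with $\lambda$ large already gives arbitrarily large $\|\omega(t)\|_{H^{\beta'}}$ uniformly on $[1/T,T]$. That gluing idea belongs to the \emph{next} theorem (instantaneous loss for non-smooth data), not this one.
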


We then consider an infinite number of rapidly growing solutions and use a~gluing argument to find initial conditions that lose regularity instantly.

\begin{thm}[Loss of regularity in the supercritical regime]\label{lossreg}
For any $\epsilon>0$, $\beta\in(0,1)$ there exists a~finite energy initial conditions $\omega_0 $ such that the unique global classical solution $\omega$ to the 2D Euler equations (in the sense of  Definition~\ref{classol}) with such initial condition satisfies 
$$\|\omega_0 \|_{H^{\beta}}\leq \epsilon, $$
$$\|\omega(x,t)\|_{H^{\beta'}}=\infty\quad \text{ for }\quad t \in(0,\infty),\beta' >\frac{(2-\beta)\beta}{2-\beta^2}.$$
\end{thm}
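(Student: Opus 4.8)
The plan is to build $\omega_0$ by superposing infinitely many rescaled copies of the smooth norm-inflation building blocks provided by Theorem~\ref{illpos}, placed on disjoint spatial scales, and then to use finite speed of propagation of information between the scales together with the uniqueness statement of Definition~\ref{classol} to conclude. First I would fix $\beta\in(0,1)$ and $\beta'>\frac{(2-\beta)\beta}{2-\beta^2}$, pick an intermediate exponent $\beta'<\beta''$ still satisfying the same inequality, and for each $n\in\N$ invoke Theorem~\ref{illpos} with $T=T_n\to\infty$ and $K=K_n\to\infty$ to obtain $\omega_0^{(n)}\in C_c^\infty$ with $\|\omega_0^{(n)}\|_{H^\beta}\le 2^{-n}\epsilon$ (rescaling the unit bound in the theorem) whose solution $\omega^{(n)}$ has $\|\omega^{(n)}(t)\|_{H^{\beta''}}\ge K_n$ on $[1/T_n,T_n]$. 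I would then translate and rescale these so that $\omega_0^{(n)}$ is supported in a~ball $B_n$ of radius $r_n$ centered at a~point $x_n$, with the $B_n$ mutually disjoint and $r_n,\,\mathrm{dist}(B_n,B_m)$ chosen so small that, by the $L^\infty$ bound on the velocity (which is controlled by $\|\omega\|_{L^1\cap L^\infty}$, preserved by transport), the support of $\omega^{(n)}(t)$ stays inside, say, $2B_n$ for all $t$ in a~time interval long enough to cover $[1/T_n,T_n]$ — this is where one uses that scaling space by $\lambda$ while scaling vorticity amplitude by a~compensating factor keeps the $H^\beta$ norm small but lengthens the time window. Set $\omega_0\coloneqq \sum_n \omega_0^{(n)}$; the disjoint supports and the $2^{-n}$ weights give $\omega_0\in H^\beta$ with $\|\omega_0\|_{H^\beta}\le\epsilon$ and $\omega_0\in L^1\cap L^\infty$.

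Next I would construct the global solution. Because the pieces live on separated scales and each individual velocity field $v[\omega^{(n)}]$ decays away from $B_n$, the natural candidate is $\omega(t)\coloneqq \sum_n \widetilde\omega^{(n)}(t)$, where $\widetilde\omega^{(n)}$ solves the Euler equation with the \emph{full} velocity $v[\omega]$ but initial data $\omega_0^{(n)}$; equivalently one sets up a~fixed-point/continuity argument for the flow map generated by $v[\omega]$ and checks that the $n$th piece is transported essentially by its own localized velocity plus a~small, smooth correction coming from the far field of the other pieces. The key point is that the far-field contribution of $\sum_{m\ne n}\widetilde\omega^{(m)}$ is smooth and bounded on $2B_n$, so it cannot destroy the regularity blow-up internal to the $n$th piece: the lower bound $\|\omega(t)\|_{H^{\beta''}}\ge \|\widetilde\omega^{(n)}(t)\|_{H^{\beta''}(2B_n)} - C_n$ still tends to $\infty$ as we let $n\to\infty$ for each \emph{fixed} $t>0$, since for every $t>0$ there are infinitely many $n$ with $t\in[1/T_n,T_n]$ and $K_n\to\infty$ along that subsequence. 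Disjointness of supports then upgrades this to $\|\omega(t)\|_{H^{\beta'}}=\infty$: if it were finite, the restriction to each $2B_n$ would be uniformly bounded, contradicting the divergence just established (one has to be slightly careful that $H^{\beta'}$ of a~sum over disjoint supports controls each summand, which holds for $\beta'\in(0,1)$ up to harmless constants, and the gap $\beta''>\beta'$ absorbs the non-locality of the fractional norm via an interpolation/commutator estimate with a~cutoff).

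Finally, uniqueness: since $\omega(t)\in L^1\cap L^\infty$ is not claimed — indeed the whole point is that $\omega_0\notin$ Yudovich class is false here, $\omega_0$ \emph{is} bounded, so Yudovich applies and we would actually get uniqueness for free, but the theorem deliberately phrases it in the weaker sense of Definition~\ref{classol} to match Theorem~\ref{illpos} and the non-Yudovich discussion — I would simply verify that the constructed $\omega$ is a~classical solution in the sense of that definition and that the definition singles it out uniquely; this reduces to the per-scale uniqueness from Theorem~\ref{illpos} plus the fact that information propagates at finite speed across the gaps between the $B_n$, so a~competing solution would have to agree with $\omega^{(n)}$ on each $B_n$ on the relevant time interval. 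The main obstacle I anticipate is the decoupling step: making rigorous that the mutual interaction of the infinitely many well-separated vortex clusters is a~genuinely lower-order, smooth perturbation on each cluster — this requires quantitative control of $\nabla v[\widetilde\omega^{(m)}]$ on $2B_n$ for $m\ne n$ (a~Biot–Savart tail estimate, uniform in the stage of blow-up of each piece), and then feeding that into a~Grönwall argument for the difference between $\widetilde\omega^{(n)}$ and the stand-alone $\omega^{(n)}$ in a~norm strong enough ($C^{1,\alpha}$, or $H^s$ with $s>2$) to retain the $H^{\beta''}$ blow-up while being weak enough that the perturbation is controllable over the long, $n$-dependent time window.
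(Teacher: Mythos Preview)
Your overall architecture is correct and matches the paper's: superpose rescaled, translated copies of the Theorem~\ref{illpos} building blocks, show the pieces interact only weakly because of the separation, and deduce norm blow-up of the glued solution at every positive time. But there is a genuine error in your treatment of uniqueness that propagates backward and affects how much work the whole proof actually requires.

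You assert that ``$\omega_0$ \emph{is} bounded, so Yudovich applies and we would actually get uniqueness for free''. This is false. Look at the initial data produced by Theorem~\ref{illpos}: it has the form $\lambda^{1-\beta}f(\lambda r)+\lambda^{1-\beta}N^{-\beta}g(\lambda r)\cos(N\alpha)$, so its $L^\infty$ norm is of order $\lambda^{1-\beta}$. To obtain growth $K_n\to\infty$ on time windows $[1/T_n,T_n]$ with $T_n\to\infty$ you must take $\lambda=\lambda_n\to\infty$, and the amplitude rescaling you introduce (the paper uses $2^{-j}$) is far too weak to compensate: one needs $\lambda_n$ to grow at least like a power of $K_n$ with a small exponent, so $\lambda_n^{1-\beta}/2^n\to\infty$. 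Hence $\omega_0\notin L^\infty$; only $\omega_0\in L^1\cap L^p$ with $p=2/(1-\beta)$ survives the gluing. This is precisely why the paper works in the class of Definition~\ref{classol} rather than in the Yudovich class, and why uniqueness is a substantial part of the argument rather than a formality.

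Once you accept that $\omega_0\notin L^\infty$, your sketch of uniqueness (``per-scale uniqueness plus finite speed of propagation'') does not go through. There is no finite propagation speed for Euler; what you have is a uniform bound on $\|v\|_{L^\infty}$ coming from $\|\omega\|_{L^1\cap L^p}$, which bounds how fast supports move, but the nonlocal Biot--Savart coupling means every piece feels every other piece instantly. The paper handles this by an $L^2$ energy estimate on the difference of two putative solutions, split piece by piece: for the $j$-th piece one gets
\[
\frac{\d}{\d t}\|W_j\|_{L^2}\le C\|\omega_{\infty,j}\|_{C^1}\|W_j\|_{L^2}+\frac{C\|\omega_{\infty,j}\|_{C^1}}{D_j}\|W\|_{L^2},
\]
and since $\|\omega_{\infty,j}\|_{C^1}$ is doubly exponential in $j$ (via \eqref{C2_est_cons}), the separations $D_j$ must be chosen larger than a \emph{quadruple} exponential in $j$ to make the coupling term summable after Gr\"onwall. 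Your proposal does not anticipate this quantitative balance, and the phrase ``chosen so small'' for $\mathrm{dist}(B_n,B_m)$ goes in the wrong direction. The existence step has the same flavor: the paper builds the solution as a $C^0_tH^4_x$-on-compacts limit of solutions with truncated data, again using the rapidly growing $D_j$ to control the interaction term in $H^4$.
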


We note that, since the initial condition from Theorem~\ref{illpos} belongs to $ C_c^{\infty}$, for Theorem~\ref{illpos} the solution $\omega$ is defined using the usual definition of classical solutions for the $2D$ Euler equations. However, since Theorem~\ref{lossreg} is concerned with an initial condition with very low regularity, we need to be a~little more precise regarding what we consider a~classical solution to 2D Euler in such a~situation.

\begin{defn}\label{classol}
We say that  $\omega \in L^\infty([0,T); L^{1}\cap L^{p})$, where $p>2$, is a~\emph{classical solution} to 2D Euler with initial conditions $\omega_{0}$ if 
\[
\omega \in C^1_{x,t} (K) \qquad \text{ for every } K= \overline{B(0,d)}\times [0,a] \subset \R^2\times [0,T)
\]
and
\[
\begin{split}
\p_t  \omega + v[\omega]\cdot \nabla \omega &=0\hspace{1.5cm} \text{ in } \R^2 \times [0,T),\\
\omega(x,0)&=\omega_{0}(x)\qquad \text{ for }x\in \R^2.
\end{split}\]  
\end{defn}

Since $\omega$ is $C_{x,t}^1$ on each  compact set this assures that the transport equation is well-defined in the classical sense, that the $L^{p}$ norms are conserved (whenever they are well defined) and that the support of $\omega$ is transported with the velocity $v[\omega]$.

We note that the initial conditions considered will not, in general, belong to the Yudovich class, but instead belong to $L^{1}\cap L^{p}$ for some $p\in (2,\infty) $. Therefore, since local well-posedness of classical solutions is not clear, particularly regarding large times and uniqueness,  we will need to resolve these problems by hand.

\subsection{Ideas of the proof}\label{sec_ideas}

In order to prove the norm inflation result, Theorem~\ref{illpos}, we start by considering $\omega_0$ consisting of a~stationary radial function and a~perturbation involving highly oscillatory angular behaviour, 
\eqnb\label{ansatz1}
\omega_{0}(x)=f(r)+g(r)\frac{\cos (N\alpha)}{N^{\beta}}.
\eqne

We note that, as $N$ grows, the effects of the velocity produced by $g(r)N^{-\beta } \cos (N\alpha)$ become less and less relevant, and thus we can approximate the solution by

$$\p_t  \omega(x,t)+v[f(r)]\cdot \nabla \omega(x,t)=0,$$
see Figure~\ref{fig1} below.\\

\begin{center}
 \includegraphics[width=5cm]{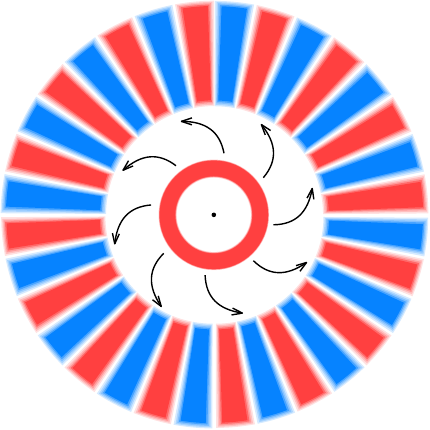}
 \end{center}
 \nopagebreak
 \captionsetup{width=.8\linewidth}
  \captionof{figure}{A sketch of the initial vorticity $\omega_0$. Here the inner vorticity depends only on $r$, and so the resulting velocity field is only angular, which causes rotation of the outer part (here denoted by the arrows). The high frequency $N$ in $\alpha$ of the outer part improves the control over the solution. We note that except for the inner part, the radial part of the vorticity must also include an outer part (supported far from the origin), which would guarantee zero average of $\omega_0$.}\label{fig1}

This already allows us to obtain, in a~fairly straightforward way, strong ill-posedness in $H^{\beta}$, $\beta\in(0,1)$, by choosing $f$ to be small in $H^{\beta}$, but such that $\|v[f(r)]\|_{C^1}$ is large. However, in order to obtain $H^{\beta'}$ norm growth for some $\beta'<\beta$ (as in \eqref{betas}), rather than merely for $\beta'=\beta$, we need to consider a~more general family of initial conditions
\eqnb\label{ansatz2}
\omega_{0} (x) =\omega_{rad} (0) + \omega_{osc} (0) \coloneqq \lambda^{1-\beta}f(\lambda r)+\lambda^{1-\beta} N^{-\beta } g(\lambda r ) \cos ( N\alpha) .
\eqne

Note that such scaling with respect to $\lambda >0$ preserves the $\dot{H}^\beta $ norm. As in \eqref{ansatz1}, the periodicity parameter $N$ allows us to improve our control over the behaviour of the solution and now the scaling parameter $\lambda$ compresses the timescale so that the growth happens faster. The appearance of the new parameter $\lambda$ makes the control of the errors more challenging than in the case of SQG \cite{Zoroacordoba}. We will approximate the solution by a~function of the form
\eqnb\label{pseudo_intro}
\begin{split}
\overline{\omega} (t )& = \overline{\omega_{rad}} (t) + \overline{\omega_{osc}} (t) \\
& \coloneqq \lambda^{1-\beta } f(\lambda r)\\
&\quad +  \frac{\lambda^{1-\beta }}{N^\beta} g(\lambda r) \cos \left( N\left( \alpha-\frac{1}{r}\int_{0}^{t} v_{\alpha}\left[ f(\lambda r)\lambda^{1-\beta}+(error)\right] \d s\right) \right).
\end{split}
\eqne
We note that $\lambda$ is related to $N$ by a~power law, which we describe in  \eqref{lambda_vs_N_intro} below. We note that we will have that $N\gg \lambda$ for $\beta $ close to $0$ and $\lambda \gg N$ for $\beta $ close to $1$. \\

In order to keep track of the regularity of the corresponding solution $\omega(t)$ of the Euler equations \eqref{vorticity} with initial data \eqref{ansatz2}, we first show (in Section~\ref{sec_step1})  that for any $T>0$ we can choose $\lambda$ large enough so that 
\[\omega (t) = \omega_{osc} (t) + \omega_{rad}(t) \qquad \text{ for } t\in [0,T],
\]
where $\omega_{osc}$ and $\omega_{rad}$ remain localized in space. We also show that the influence of $\omega_{osc}$ on $\omega_{rad}$ is exponentially small in $N$, so that $\overline{{\omega}_{rad}}$ approximates $\omega_{rad}$, 
\[
\|\omega_{rad}-\overline{{\omega}_{rad}}\|_{L^{2}}\leq \ee^{-\frac{N}{2}} \qquad \text{ on }  [0,T].
\]
This can be proved by an energy estimate on $W\coloneqq \omega_{rad} - \overline{\omega_{rad}}$, which shows that $\| W \|_{L^2} $ grows exponentially in time of order $\ee^{\lambda^{1-\beta}t}$, as well as by the localization of $\omega_{osc}$ and $\omega_{rad}$, and a~Paley-Wiener-type estimate, which shows that the growth of $\|W \|_{L^2}$ is dominated, on time interval $[0,T]$, by an $O(\ee^{-N})$ smallness of the influence of $\omega_{osc}$ onto $\omega_{rad}$, see Lemma~\ref{locerror} for details. \\

Next, in order to make sure that the evolution of $\omega_{osc}$ is governed, to a~leading order, by $v[\overline{\omega_{rad}}]$ (i.e. that $\omega_{osc}$ can be approximated by $\overline{\omega_{osc}}$), we need to show that $v[\omega_{rad}]$ can be approximated by $v[\overline{\omega_{rad}}]$, and that its effect is not overpowered by $v[\omega_{osc}]$. We address the latter issue by proving that 
\eqnb\label{intro_c1_est}
\| \omega_{osc} (t) \|_{C^1} \leq \lambda^{2-\beta } N^{1-\beta } \exp (C\lambda^{1-\beta })
\eqne
(see Lemma~\ref{winc1}). We then show that $\omega_{osc}$ can be approximated by $\overline{\omega_{osc}}$ by noting that the oscillatory part $\overline{\omega_{osc}}$ of the pseudosolution \eqref{pseudo_intro} satisfies the same PDE as $\omega_{osc}$, except that the velocity field is averaged over $\alpha$, which allows us to use Lagrangian trajectories to show that
\eqnb\label{osc_error_intro}
\| \omega_{osc} - \overline{\omega_{osc}} \|_{L^2} \leq C \lambda^{2-3\beta } N^{-2\beta } \log N.
\eqne
Indeed, the above estimate can be obtained by noting that the radius of the Lagrangian trajectory of $\overline{\omega_{osc}}$ remains constant throughout the flow, as well as using a~version of the classical Log-Lipschitz velocity estimate in polar coordinates \eqref{v_loglip}, a~resulting  $L^\infty$ radial velocity estimate \eqref{vr_NlogN} and the $C^1$ estimate \eqref{intro_c1_est}. \\

We also note that \eqref{osc_error_intro} is the most subtle estimate regarding the relation between $N$ and $\lambda$. Indeed, in order to quantify growth of the solution $\omega_{rad}+\omega_{osc}$ in Sobolev spaces $H^s$, $s\in (0,1)$, we need to control the oscillation error in such spaces in relation to $\| \omega_{osc}\|_{L^2}$ (see \eqref{osc_error} below for details). This can be achieved by first controlling the $L^2$ error directly from \eqref{osc_error_intro}, which gives
\[
\| \omega_{osc} - \overline{\omega_{osc}} \|_{L^2} \lec \| \omega_{osc} \|_{L^2} \lambda^{2-2\beta } N^{-\beta } \log N.
\] 
This shows that, in order to control the oscillation error, we need $N^\beta$ to be a~little bit bigger than $\lambda^{2-2\beta}$, so that we can make sure the error is small and that $\log N$ can be absorbed. Moreover, in order to obtain a~gap for instantaneous loss of regularity that is as large as possible, this in fact dictates the relation between $N$ and $\lambda$. To be more precise we impose the relation
\eqnb\label{lambda_vs_N_intro}
\lambda^{2-2\beta + \delta }=N^\beta,
\eqne
where $\delta >0$ is sufficiently small so that 
\[
\beta_\delta \coloneqq \frac{(2+\delta - \beta )\beta }{2+\delta - \beta^2} > \frac{ (2-\beta ) \beta }{2-\beta^2}.
\]
The last fraction represents the largest gap of instantaneous loss of regularity that can be obtained using our method, in the sense that we will use Sobolev interpolation (in \eqref{growth_needed} below)  to obtain that, for every $\beta'>\beta_\delta$,
\[
\| \omega_{osc} \|_{H^{\beta'}} \geq C\frac{(N\lambda^{2-\beta })^{\beta'}}{(\lambda N)^{\beta}} \geq C \lambda^{\widetilde{\epsilon }}\qquad \text{ for } t\in [1/T,T],
\]
where $\widetilde{\epsilon}>0$ is a~small constant. This which gives the norm inflation claimed by Theorem~\ref{illpos} by taking $\lambda$ sufficiently large.\\

 We note that, in order to obtain the last inequality, one needs to be able to estimate from below the size of the $H^{s}$ norms of the pseudosolution $\overline{\omega }(t)$ for $s\in (0,1)$. While one can use the explicit formula \eqref{pseudo_intro} for the pseudosolution, we note that it is merely ``almost explicit'', which makes the issue nontrivial. 
 
 We show that the error term can be estimated in $C^1$ by a~fractional power of the $C^1$ norm of the leading order term $f(\lambda r )\lambda^{1-\beta }$, but this by itself still does not suggest a~way of computing a~lower bound on $\| \overline{\omega } \|_{H^s}$ using an explicit formula, i.e. the Sobolev-Slobodeckij representation. Instead, we use the Sobolev interpolation $\| \cdot \|_{\dot H^r} \leq c \| \cdot \|^{\frac{r-q}{s-q}}_{\dot H^s}  \| \cdot \|^{\frac{s-r}{s-q}}_{\dot H^q}$, and we choose $r=0$ and $q<0$. This way we can make use of the $L^2$ conservation of $\overline{\omega}$  to obtain a~lower bound, and we need to estimate a~negative Sobolev norm of $\overline{\omega}$ from above. We provide a~subtle argument which gives a~robust estimate of such form, and which can also take into account the error term, see Lemma~\ref{lem_H-delta} for details.\\

As for Theorem~2 we note that taking $\lambda$ larger in the above argument only increases the norm inflation, as well as ensures that it occurs faster and persists for  larger times. Moreover, it also makes the solution more localized. Thus, for each $j$ we can construct a~solution $\omega_j$ to the $2$D Euler equations \eqref{vorticity} such that 
\eqnb\label{pieces_inflation_intro}
\| \omega_j ( \cdot , t ) \|_{H^s} \geq 4^j \qquad \text{ for } s> \frac{(2-\beta ) \beta }{2-\beta^2}+\frac{1}{j}, t\in [4^{-j},1 ],
\eqne
\eqnb\label{pieces_supp_control}
| \supp \, \omega_j | \leq 2^{-j} \quad \text{ for } t\geq 0\text{, with }\quad \supp \, \omega_j \subset B(0,1)\quad \text{ for }t\in [0,2^j]
\eqne
and 
\eqnb\label{pieces_Lp}
\| \omega_j (\cdot , t )\|_{L^p} =C\quad \text{ for all }t\in [0,1], \, p\in [1, 2/(1-\beta ) ] \supset [1,2].
\eqne

Thus considering the rescalings
\eqnb\label{rescalings_gluing_intro}
 \frac{1}{2^j} \omega_{j} \left( x, \frac{t}{2^j} \right), 
\eqne
we obtain the norm inflation of order $2^j$ on time interval $[2^{-j},2^j]$, which expands to $(0,\infty)$ as $j\to \infty$. We can therefore consider a~series of the rescalings \eqref{rescalings_gluing_intro}, translated in the $x_1$ direction by a~rapidly increasing sequence distances $R_j$, defined by $R_0\coloneqq 0$, $R_{j+1} \coloneqq R_j+D_j + D_{j+1}$ for some large $D_j$'s, see Figure~\ref{fig3} below and \eqref{DJ}. Let us denote the corresponding translations of \eqref{rescalings_gluing_intro} by $ \tilde{\omega}_j (x,t)$.

\begin{figure}[h]
\begin{center}
 \includegraphics[width=\textwidth]{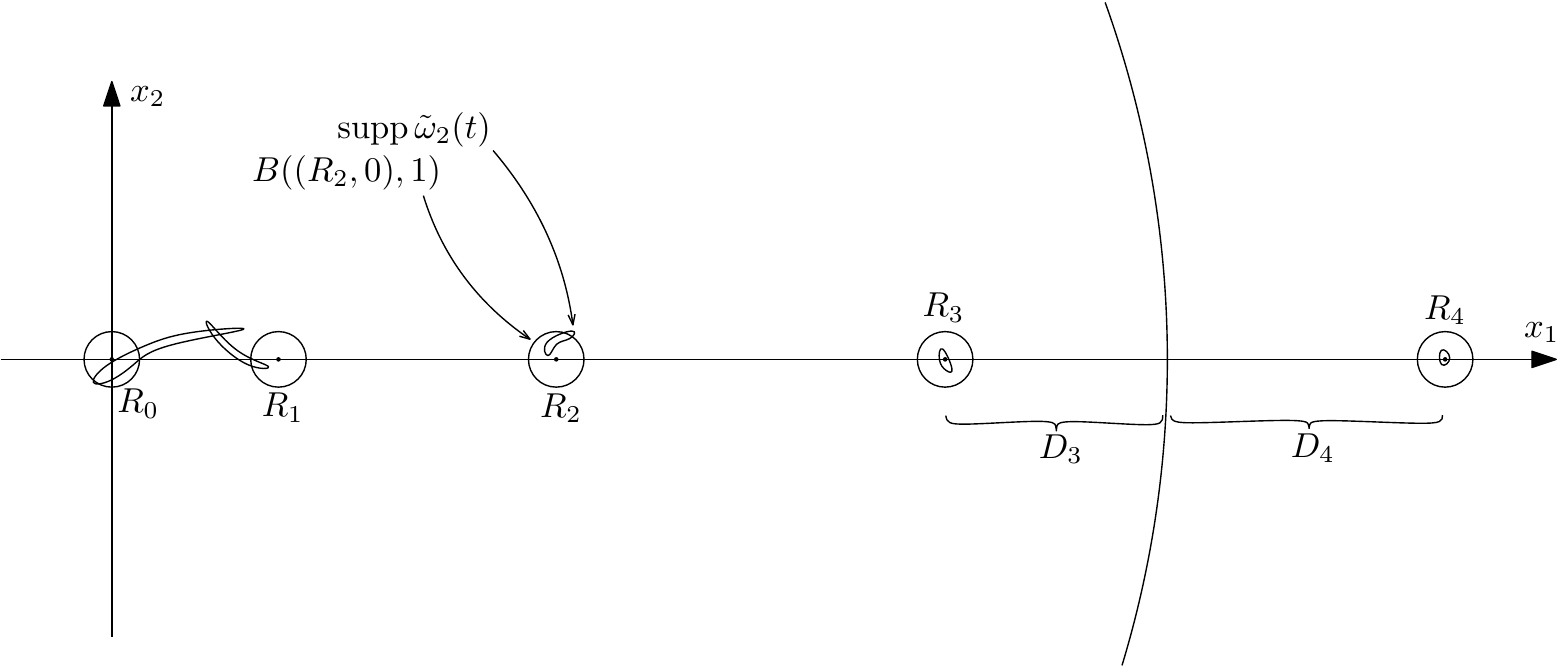}
 \end{center}
 \nopagebreak
 \captionsetup{width=.8\linewidth}
  \captionof{figure}{A~sketch of the gluing argument. This shows the support of the first few individual pieces $\tilde{\omega}_j$ at some time $t\in (2^{-j},2^j)$, where $j=4$. Note that, given $j$ and $t\in[0,2^{j}]$, $\supp \, \tilde{\omega}_{k}(t)\subset B((R_k,0),1)$ for $k\geq j$. } \label{fig3} 
\end{figure}
In order to obtain the claimed gap loss of Sobolev regularity, we first perform a~subtle limiting argument to show existence of a~solution to the $2$D Euler equations \eqref{vorticity} with the corresponding initial data. In fact, we show strong convergence of the classical solution for a~truncated initial condition (i.e. consisting of the first $J$ pieces, $J\geq 0$) in $C^0_tH^4_x (K)$ for any compact set $K\subset \R^2 \times [0,\infty )$, which gives us a~limit $\omega_\infty$ that is a~classical solution in the sense of Definition~\ref{classol} above, see \eqref{limit_in_Linfty_H4} for details.\\

We can then observe that, given $t>0$ and $\beta' > (2-\beta )\beta / (2-\beta^2)$, we can pick a~sufficiently large $j$ so that the norm inflation \eqref{pieces_inflation_intro} implies arbitrarily large $H^{\beta'}$ norm of the $j$-th piece of $\omega_\infty$, and we need to make sure that the pieces do not interact with each other too much to affect this norm inflation.\\ 

To this end we note that the pieces are localized in the sense that, given $t\in [0,2^j )$, the support of $\tilde{\omega}_j$ is contained within $B((R_j,0),1)$. This and \eqref{pieces_supp_control} give us an increasingly better control as $j\to \infty$. On the other hand, for the small values of $j$, we lose the control of the individual pieces (which can, for example, leave $B((R_j,0),1)$ and interact with each other, as sketched in Figure~\ref{fig3}), but the support of all pieces  has measure bounded by $1$ and is included in $B(0,R_j+D_j)$, which implies that it is separated from the further pieces, see Fig.~\ref{fig3} for a~sketch.  
This can be obtained thanks to the $L^p$ norm control \eqref{pieces_Lp}, which implies a~finite maximal speed $v_{max}$, and a~choice of the $D_j$'s (see \eqref{DJ}), as well as the fact that our $C^0_t H^4_x$-loc argument lets us obtain the required properties of our constructed limit $\omega_\infty$.\\

This control of  the distances between pieces of $\omega_\infty$ lets us show that, given $t\in (2^{-j},2^j)$, the norm inflation of the $j$-th piece of $\omega_\infty$ is not affected by either the following pieces or by the sum of the previous pieces,  see \eqref{inflation_gluing} for details. We emphasize that this argument implies not only that $H^{\beta'}$ regularity has lost instantly at $t=0$, but also remains lost for all $t>0$.\\

A~similar argument can be used to show uniqueness of $\omega_\infty$, except that we need to make use of the both properties of the localization: the control of the distances between pieces and the measure of their supports. Moreover, we need to use Lagrange trajectories to keep track of the trajectories of the particles originating from each piece (see \eqref{gluing_uniq_traj}). These facts, together with the $C^1$ bounds of each of the pieces at $t=0$ (see \eqref{pieces_c1_bds_uniq}) and estimates of the Biot-Savart law \eqref{bs_law}, let us estimate the $C^1$ norm of the vorticity evolving from each piece (see \eqref{temp22} for details), given any solution in the sense of Definition~\ref{classol}, and let us establish a~minimal growth of the $R_j$'s (which involves $4$ exponential functions in $j$, see \eqref{DJ_actual}), that allows an $L^2$-based uniqueness proof (see \eqref{l2_uniq_setup1}--\eqref{l2_uniq_setup2} for the main setup). \\

In fact, supposing there are two distinct solutions that coincide until some time $T\geq 0 $, we pick a~$j_0\in \N$ (dependent on $T$) that identifies the piece after which  the uniqueness is unlikely to occur. Namely we pick $j_0$ such that $2^{j_0} \sim T$ (e.g. $j_0=4$ in Fig.~\ref{fig3}), which, for each $j\geq j_0$, allows us to efficiently control the $C^1$ norm of the vorticity originating from the $j$-th piece for each $j \geq j_0 $, using the $C_t^0 H^4_x $ convergence on compact sets of our construction, see \eqref{l2_uniq_setup1}. As a~result we can make the final choice of the initial distances between pieces (see \eqref{DJ_actual}), such that, for each such $j$, the $L^2$ norms of the differences between the $j$-th pieces of the two distinct solutions can be estimated by a~constant that is arbitrarily small with respect to $j$ (see \eqref{l2_uniq_temp}). In order to make the resulting sum convergent, we simply pick $j^{-2} $ (see \eqref{Wj_final}). On the other hand, we apply a~rougher estimate for $j<j_0$ (see \eqref{l2_uniq_setup2}) to obtain  an $L^2$ estimate covering all such pieces at the same time. This gives uniqueness by a~simple argument by contradiction (see \eqref{l2_uniq_concl} for details).

\subsection{Outline of the paper}

In Section~\ref{prel} we give some basic notation that we will use throughout the paper, as well as some preliminary facts. In Section~\ref{estvel} we obtain some technical bounds related to the Biot-Savart law \eqref{bs_law} as well as an upper bound on a~negative Sobolev norm of functions used our construction. In Section \ref{sec_initial} we give the family of initial conditions that allows us to show Sobolev norm inflation and we prove such growth. Finally, in Section \ref{gluing}, we show that a~gluing argument allows us to build a~global in time solution that loses regularity, and we show that it is the unique classical solution with the given initial conditions.

\section{Notation and preliminaries}\label{prel}

Throughout the text we will use functional norms, such as $H^{\beta}$ for example, which refer to the spatial variables. For example, $\|f(x,t)\|_{H^{\beta}}$  refers to the spatial $H^\beta$ norm for the specific time (or times) considered. The only exception to this rule appears in Section \ref{gluing} below, where we prove loss of regularity in a~way that requires different treatment of the space and time regularity. In order to avoid confusion, we will use sub-indexes to indicate the relevant variable for a~norm; for example $\|f(x,t)\|_{C^{1}_{x}}$ would denote the spatial $C^1$ norm (for a~fixed $t$) and $\|f(x,t)\|_{C^{1}_{x,t}}$ would denote the $C^1$ in both space and time.\\

We denote by $\p_t$ the partial derivative with respect to $t$, and by $\p_i$ the partial derivative with respect to $x_i$, $i=1,2$. Throughout the paper we write ``$\lec$'' to denote ``$\leq C$'', where $C>1$ is some universal constant, whose value may change from line to line.  We set $\Lambda\coloneqq (-\Delta )^{1/2}$, and we recall \cite[(4) in Chapter~V]{stein} that
\eqnb\label{formula_for_Lambda_-delta}
\Lambda^{-\delta } f (x) = \frac1{\gamma (\delta )} \int_{\R^2} \frac{f(y)}{|x-y|^{2-\delta}} \d y,
\eqne
where $\gamma (\delta ) \coloneqq \pi 2^\delta \Gamma (\delta /2) /\Gamma (1-\delta /2)$. We will use the following ODE fact: 
\eqnb\label{ode_fact}
\text{If }f'(t) \leq cf(t) + b\text{ and }f(0)=0\text{ then }f(t)\leq \frac{b}{c} \left( \ee^{ct} -1 \right)\leq bt \ee^{ct}.
\eqne

We will make use of polar coordinates, namely, given $(x_{1},x_{2})\in \R^2$, we  define $(r,\alpha)\in [0,\infty )\times (-\pi , \pi ]$ by $x_1 = r \cos (\alpha)$, $x_2 = r\sin (\alpha)$.  Moreover, given $f(r,\alpha)\colon \R^2 \to \R$, we denote by
\[
Af (r) \coloneqq \frac{1}{2\pi }\int_{-\pi }^\pi f(r,\alpha )\,  \d \alpha 
\]
the average of $f$ with respect to $\alpha$.\\

Since most of our computations will be performed in polar coordinates, we will often say that a~function is $\frac{2\pi}{N}-$periodic if, in polar coordinates,  $f(r,\alpha)=f(r,\alpha+\frac{2\pi}{N})$.

Moreover, we will use $v_{r}$ and $v_{\alpha}$ to denote the radial and angular component of the velocity (respectively). For example, \eqref{bs_law} gives that
\eqnb\label{vr_f}
v_r [f ] (x) = \widehat{x}  \cdot  \int_{\R^2}\frac{(x-y)^\perp f (y)}{|x-y|^2 } \d y,
\eqne
where $\widehat{x} \coloneqq x/|x|$. \\

Furthermore, we note that 
\eqnb\label{v_C1_est}
\| v[\omega ] \|_{W^{1,\infty } } \lec (1+R) \| \omega \|_{L^\infty} \left(1+ \log \left( 1+  \| \omega \|_{W^{1,\infty }} \right) \right)
\eqne
for any $\omega \in W^{1,\infty}$ with $\supp\, \omega \subset B(R)$. Indeed, the claim is trivial if  $\na \omega =0$, and otherwise we let $r\coloneqq \|\omega \|_{L^\infty}/\| \na \omega \|_{L^\infty}$ to obtain
\[
\begin{split}
|\na v[\omega ] (x) |&= \frac2\pi \left| \int_{B(x,r) } \frac{(x-y)^\perp \otimes \na \omega }{|x-y|^2} \d y + \int_{B(x,r)^c } \frac{(x-y)^\perp \otimes \na \omega }{|x-y|^2} \d y \right|\\
&\lec \| \na \omega \|_{L^\infty } r +  \| \omega \|_{L^\infty}  + \| \omega \|_{L^\infty } \int_{B(x,r^c)\cap B(0,R)} \frac{\d y}{|x-y|^2} \\
&\lec \| \na \omega \|_{L^\infty } r +  \| \omega \|_{L^\infty}  \left( 1+ \log \frac{R}{r} \right) \\
&\lec \| \omega \|_{L^\infty } \left( 1+ \log(1+R) + \log (1+ \|  \omega \|_{W^{1,\infty }} )\right),
\end{split}
\]
where we integrated by parts in the first inequality and we used the notation $(a\otimes b)_{ij}\coloneqq a_ib_j$. We also have $\|v [\omega ] \|_{L^\infty }\lec (1+R) \| \omega \|_{L^\infty }$, and so \eqref{v_C1_est} follows.

Thus,  if $\omega_0 \in C^\infty_c (\R^2)$, then the unique solution $\omega$ of the Euler equations \eqref{vorticity} satisfies
\[
\begin{split}
\|\omega (t) \|_{C^{1}}&\leq \|\omega_0 \|_{C^{1}} +C\int_0^t \| v[\omega ] \|_{C^1} \| \omega \|_{C^1}  \\
&\leq \|\omega_0 \|_{C^{1}} + C\int_0^t \| \omega \|_{C^1}\|\omega \|_{L^\infty} \log  \| \omega \|_{C^1}  
\end{split}
\]
for every $t>0$ (which can be proved by considering $\| \nabla \omega \|_{L^p}$ and taking $p\to \infty$). Thus, since $\| \omega (t) \|_{L^\infty} \leq \| \omega_0 \|_{L^\infty } \leq \| \omega_0 \|_{C^1}$, we obtain in particular that
\eqnb\label{C2_est_cons}
\|\omega(t)\|_{C^1} \lec \ee^{M\ee^{CMt}},
\eqne
where $M\coloneqq \| \omega_0 \|_{C^1}$.

Finally, we recall the Sobolev-Slobodeckij characterization
\[
\| f \|_{\dot H^s }^2 = C_s \int_{\R^2} \int_{\R^2} \frac{|f(x)-f(y)|^2}{|x-y|^{2+2s}}   \d x \,\d y \qquad \text{ for }s\in (0,1),
\]
see \cite[Proposition~3.4]{DPV} for a proof. In particular, if $\{ f_j \}_j$ is a~family of disjointly supported functions in $\R^2$, then 
\eqnb\label{ss_consequence}
\left\| \sum_j f_j \right\|_{\dot H^s} \geq \sum_j \left\|  f_j \right\|_{\dot H^s}.
\eqne

\section{Velocity and vorticity estimates}\label{estvel}
In this section we study some properties of the vorticity function and the velocity fields given by the Biot-Savart law \eqref{bs_law}. We also estimate $H^\beta$ norms of vorticity functions given in terms of an oscillatory ansatz.

First we note that if $\omega$ is a~smooth solution of the Euler equations \eqref{vorticity} with initial data $\omega_0$, then
\eqnb\label{2pi/N-periodicity}
\omega(t) \text{ is }2\pi/N\text{-periodic for all }t>0\text{ if }\omega_0\text{ is.}
\eqne
Indeed, if $\omega (t)$ is not $2\pi/N$-periodic at any time $t>0$ then $\omega (R_{2\pi/N}x,t)$ is another solution to the Euler equations with the same ($2\pi/N$-periodic) initial data, which contradicts uniqueness, where $R_\alpha $ denotes the rotation operation by $\alpha\in [0,2\pi)$ in $\R^2$.

\subsection{The Log-Lipschitz estimate}

\begin{lemma}[Log-Lipschitz continuity of $v_r$ and $v_\alpha$]\label{lem_v_loglip} Suppose that $\supp\,f\subset \Omega \coloneqq B(0,R)\setminus B(0,R/2)$. Then
\eqnb
|v_r [f ] (x)- v_r [f ] (y)| \lec \| f \|_{L^\infty} |x-y| \left( 1 + \log \frac{R}{|x-y| }\right) 
\eqne
and 
\eqnb%\label{valpha_loglip}
|v_\alpha [f ] (x)- v_\alpha [f ] (y)| \lec \| f \|_{L^\infty } |x-y| \left( 1 + \log \frac{R}{|x-y| }\right) 
\eqne
for any $x,y\in \Omega $.
\end{lemma}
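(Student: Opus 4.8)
The plan is to prove both estimates simultaneously, since the kernels for $v_r$ and $v_\alpha$ differ only by which component of $(x-y)^\perp/|x-y|^2$ we pick up after dotting with $\widehat x$ or $\widehat x^\perp$; the essential point is a standard ``one singular kernel, two points'' argument adapted to the fact that the kernel $K(x,y) = (x-y)^\perp/|x-y|^2$ is not a convolution kernel of a Calder\'on--Zygmund operator in the naive sense but still satisfies the right size and smoothness bounds. Concretely, for $x,y \in \Omega$ write $h \coloneqq |x-y|$ and, as in the derivation of \eqref{v_C1_est}, split the integration domain $\Omega$ into the ``near'' region $B(x,2h)$ and the ``far'' region $\Omega \setminus B(x,2h)$.

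First I would handle the near region. On $B(x,2h)$ we estimate $v_r[f](x)$ and $v_r[f](y)$ separately (not their difference), using only the size bound $|K(x,z)| \le 1/|x-z|$: since $\supp f$ has bounded overlap with $B(x,2h)$ and $B(y,3h) \supset B(x,2h)$, each of the two integrals is bounded by $\|f\|_{L^\infty} \int_{B(x,2h)} |x-z|^{-1}\,\d z \lec \|f\|_{L^\infty} h$, which is of the required form (the logarithm is $\ge 1$ for $h$ small, and for $h$ comparable to $R$ the whole estimate is trivial from $\|v\|_{L^\infty} \lec R\|f\|_{L^\infty}$). Second, for the far region I would use the difference of kernels: for $z$ with $|x-z| \ge 2h$ we have the gradient bound $|\nabla_x K(x,z)| \lec |x-z|^{-2}$, hence by the mean value theorem $|K(x,z) - K(y,z)| \lec h |x-z|^{-2}$ (using $|y-z| \ge |x-z|/2$ along the segment). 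Therefore
\[
\left| \int_{\Omega \setminus B(x,2h)} (K(x,z)-K(y,z)) f(z)\,\d z \right| \lec \|f\|_{L^\infty}\, h \int_{(\Omega \setminus B(x,2h)) } \frac{\d z}{|x-z|^2} \lec \|f\|_{L^\infty}\, h \log\frac{R}{h},
\]
since $\Omega \subset B(0,R)$ forces $|x-z| \le 2R$ on the domain of integration, so the radial integral runs from $2h$ to $2R$ and contributes $\log(R/h)$ up to a constant. Adding the near and far contributions, and projecting onto $\widehat x$ (for $v_r$) or onto $\widehat x^\perp$ (for $v_\alpha$) — which only costs a bounded factor since $|\widehat x| = 1$ — gives both claimed inequalities, after absorbing the case $h \gtrsim R$ as noted.

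The one genuinely delicate point, and the step I expect to require the most care, is the passage from $v_r[f](x)-v_r[f](y)$ to the difference of the vector integrals: $v_r$ is defined by $\widehat x \cdot \int K(x,z) f(z)\,\d z$, so the two terms involve $\widehat x$ and $\widehat y$, which differ. I would write $v_r[f](x) - v_r[f](y) = \widehat x \cdot (I(x) - I(y)) + (\widehat x - \widehat y)\cdot I(y)$, where $I(x) \coloneqq \int K(x,z)f(z)\,\d z$. The first term is controlled by the kernel argument above. For the second, $|\widehat x - \widehat y| \lec |x-y|/|y| \lec h/R$ because $y \in \Omega$ means $|y| \ge R/2$, while $|I(y)| = |v[f](y)| \lec R\|f\|_{L^\infty}$; multiplying gives $\lec \|f\|_{L^\infty} h$, again of the required form. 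The same decomposition with $\widehat x^\perp$ handles $v_\alpha$. This bookkeeping of the ``rotating frame'' is the only thing beyond the textbook log-Lipschitz estimate, and once it is in place the proof is complete.
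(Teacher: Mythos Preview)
Your proof is correct and follows essentially the same approach as the paper: a near/far split at scale $2h=2|x-y|$, separate size bounds on the near piece, a mean-value bound $|K(x,z)-K(y,z)|\lec h|x-z|^{-2}$ on the far piece integrated over $\Omega$ to produce the $\log(R/h)$, and an extra term coming from $\widehat x\neq\widehat y$ controlled via $|\widehat x-\widehat y|\lec h/R$. The only cosmetic difference is that the paper handles the $\widehat x-\widehat y$ contribution inside the far-region integral (bounding $\frac{|\widehat x-\widehat y|}{|x-z|}$ by $\frac{\delta}{R|x-z|}\lec\frac{\delta}{|x-z|^2}$ since $|x-z|\lec R$ on $\Omega$), whereas you pull it outside as $(\widehat x-\widehat y)\cdot I(y)$ and use $|I(y)|\lec R\|f\|_{L^\infty}$; both give an $O(\|f\|_{L^\infty}h)$ contribution and the argument is otherwise identical.
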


\begin{proof}[Proof of Lemma~\ref{lem_v_loglip}.]
The proof is a~modification of the classical proof (due to Yudovich~\cite{Y}) of the log-Lipschitz bound on $v[f ]$.

For every $x_1,x_2\in \Omega $ we use the formula \eqref{vr_f} for $v_r$ to obtain 
\[
\begin{split}
| v_r [f ]& (x_1)-v_r [f ] (x_2) | \leq \int_{B(x_1,2\delta ) }  \frac{|f (y)|}{|x_1-y |} \d y + \int_{B(x_1,2\delta ) }  \frac{|f (y)|}{|x_2-y |} \d y \\
&\hspace{2cm}+ \int_{\Omega \setminus B(x_1,2\delta ) }  |f (y) | \left| \widehat{x_1} \cdot \frac{(x_1-y)^\perp}{|x_1-y|^2 } -  \widehat{x_2} \cdot \frac{(x_2-y)^\perp}{|x_2-y|^2 } \right| \d y \\
& \lec \| f \|_{L^\infty} \left( \int_{B(2\delta ) }  |y|^{-1} \d y  + \int_{B(3\delta ) } |y |^{-1} \d y \right.\\
&\hspace{2cm}\left.+ \int_{\Omega \setminus B(x_1,2\delta ) }   \left( \frac{ | \widehat{x_1}-\widehat{x_2}|}{|x_1-y| } + \left| \frac{(x_1-y)^\perp}{|x_1-y|^2 } -\frac{(x_2-y)^\perp}{|x_2-y|^2 } \right| \right) \d y \right)  \\
& \lec  \| f \|_{L^\infty} \left(\delta + \int_{\Omega \setminus B(x_1,2\delta ) }  \left( \frac{ \delta }{R |x_1-y| } + \frac{\delta }{|x_* -y |^2} \right)  \d y \right) , 
\end{split}
\]
where $x_*$ is a~point between $x_1,x_2$. We now note that $R\gec |x_1 -y |$ and that $| x_*-y | \sim |x_1 - y|$ to obtain
\[
\begin{split}
| v_r [ f ] (x_1)-v_r [f ] (x_2) |& \lec \| f \|_{L^\infty} \delta\left( 1 + \int_{\Omega \setminus B(x_1,2\delta ) } |x_1 - y |^{-2} \d y \right) \\
&\lec \| f \|_{L^\infty } \delta\left( 1 + \log \frac{R}{\delta }\right) ,
\end{split}\]
as required. An analogous argument gives the same result for $v_{\alpha}$.
\end{proof}

\begin{corollary}\label{loglip}
 Suppose that $\supp\,f\subset \Omega \coloneqq B(0,R)\setminus B(0,R/K)$ for some $K>1$. Then
\eqnb\label{v_loglip}
\begin{split}
|v_r [f ] (x)- v_r [f ] (y)| &+ |v_\alpha [f ] (x)- v_\alpha [f ] (y)| \\
&\quad \lec K \| f \|_{L^\infty } |x-y| \left( 1 + \log \frac{R}{|x-y|}\right) 
\end{split}
\eqne
for all $x,y\in \Omega $.
\end{corollary}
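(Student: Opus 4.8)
The plan is to reduce Corollary~\ref{loglip} to Lemma~\ref{lem_v_loglip} by a simple dyadic decomposition of the annulus $\Omega = B(0,R)\setminus B(0,R/K)$ into sub-annuli of the type handled by the lemma, together with a telescoping (triangle-inequality) argument along a path connecting $x$ to $y$. The key observation is that Lemma~\ref{lem_v_loglip} controls the log-Lipschitz modulus of $v_r[f]$ and $v_\alpha[f]$ whenever $f$ is supported in an annulus whose inner and outer radii differ by a factor of $2$; the only thing lost in passing to a factor of $K$ is a multiplicative constant proportional to $K$, which is exactly what the statement allows.

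First I would fix a dyadic scale. Write $\Omega = \bigcup_{i=0}^{m} \Omega_i$, where $\Omega_i \coloneqq B(0, R/2^i)\setminus B(0,R/2^{i+1})$ and $m \coloneqq \lceil \log_2 K\rceil$, so that there are at most $\log_2 K + 1$ pieces. Decompose $f = \sum_i f_i$ with $f_i \coloneqq f\,\mathds{1}_{\Omega_i}$ (the indicator), so that $v_r[f] = \sum_i v_r[f_i]$ by linearity of the Biot--Savart law, and similarly for $v_\alpha$. Each $f_i$ is supported in an annulus of the form $B(0,\rho_i)\setminus B(0,\rho_i/2)$ with $\rho_i = R/2^i$, so Lemma~\ref{lem_v_loglip} applies to $v_r[f_i]$ and $v_\alpha[f_i]$, but only for $x,y$ lying in $\Omega_i$. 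The difficulty is that the points $x,y\in\Omega$ in the statement need not lie in the same $\Omega_i$, and the log-Lipschitz bound for $v[f_i]$ is only asserted for pairs inside $\Omega_i$.

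To get around this, I would connect $x$ and $y$ by a short path inside $\Omega$ and split the increment along it. Concretely, note that the classical log-Lipschitz bound $|v[g](x)-v[g](y)|\lec \|g\|_{L^\infty}|x-y|(1+\log^+(1/|x-y|))$ holds for \emph{all} $x,y\in\R^2$ (not just those in the support annulus) by Yudovich's argument; so in fact one can apply the full-plane log-Lipschitz estimate to $v_r[f_i]$ and $v_\alpha[f_i]$ directly --- the restriction to $\Omega_i$ in Lemma~\ref{lem_v_loglip} is merely an artifact of how the statement is phrased, and its proof only ever uses that the support has diameter $\lesssim \rho_i \leq R$ and lies within distance $\lesssim R$ of the relevant points, which remains true for any $x,y\in\Omega\subset B(0,R)$. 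Thus for every $i$ and every $x,y\in\Omega$,
\[
|v_r[f_i](x)-v_r[f_i](y)| + |v_\alpha[f_i](x)-v_\alpha[f_i](y)| \lec \|f_i\|_{L^\infty}|x-y|\left(1+\log\frac{R}{|x-y|}\right).
\]
Summing over $i$ and using $\|f_i\|_{L^\infty}\leq\|f\|_{L^\infty}$ together with the bound $m+1 \leq \log_2 K + 2 \lec K$ (for $K>1$) gives exactly \eqref{v_loglip}.

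The main obstacle --- really the only subtlety --- is making sure the passage from Lemma~\ref{lem_v_loglip} to arbitrary $x,y\in\Omega$ is legitimate: one must either re-run the proof of the lemma observing that the hypothesis $x,y\in\Omega$ was used only through $|x-y|\lesssim R$ and $\operatorname{dist}(x,\supp f)\lesssim R$, or invoke the standard global log-Lipschitz estimate for the Biot--Savart kernel applied to each compactly supported piece $f_i$. Either route is routine; I would favor the second, citing the classical bound, since it avoids rewriting the estimate. The factor $K$ then simply counts the number of dyadic annuli needed to cover $\Omega$, and everything else is the triangle inequality.
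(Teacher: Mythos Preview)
There is a genuine gap in your per-piece estimate. You claim that for each $f_i$ supported in $\Omega_i=B(0,\rho_i)\setminus B(0,\rho_i/2)$ and for \emph{all} $x,y\in\Omega$,
\[
|v_r[f_i](x)-v_r[f_i](y)| \lec \|f_i\|_{L^\infty}\,|x-y|\left(1+\log\frac{R}{|x-y|}\right),
\]
justifying this by asserting that the proof of Lemma~\ref{lem_v_loglip} ``only ever uses that the support has diameter $\lesssim R$ and lies within distance $\lesssim R$ of the relevant points.'' That is not so. In the proof one splits
\[
\widehat{x_1}\cdot\frac{(x_1-y)^\perp}{|x_1-y|^2}-\widehat{x_2}\cdot\frac{(x_2-y)^\perp}{|x_2-y|^2},
\]
and the first resulting far-field term is $|\widehat{x_1}-\widehat{x_2}|/|x_1-y|$. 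The bound $|\widehat{x_1}-\widehat{x_2}|\lec\delta/R$ used in the lemma comes from $|x_1|,|x_2|\geq R/2$; for $x_1,x_2\in\Omega$ one only has $|x_1|,|x_2|\geq R/K$, hence $|\widehat{x_1}-\widehat{x_2}|\lec K\delta/R$. This factor depends solely on the location of $x_1,x_2$, not on where $f_i$ is supported, so it appears identically in every piece of your decomposition. Your alternative route via the global log-Lipschitz bound for $v[f_i]$ does not escape the issue either: passing from $v$ to $v_r=\widehat{x}\cdot v$ reintroduces exactly the same $(\widehat{x_1}-\widehat{x_2})\cdot v[f_i](x_2)$ term. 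A concrete check: take $f_0$ supported near radius $R$ with $|v[f_0]|\sim R\|f\|_{L^\infty}$ near the origin, and $x,y$ on the inner circle $|x|=|y|=R/K$ at angle $\pi/2$ apart; then $|v_r[f_0](x)-v_r[f_0](y)|\sim R\|f\|_{L^\infty}$ while your claimed bound gives only $(R/K)(1+\log K)\|f\|_{L^\infty}$.

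The paper's proof is exactly this observation and nothing more: re-run the proof of Lemma~\ref{lem_v_loglip} verbatim, replacing the single inequality $|\widehat{x_1}-\widehat{x_2}|\lec\delta/R$ by $|\widehat{x_1}-\widehat{x_2}|\lec K\delta/R$; no other step sees the aspect ratio of the annulus. Your dyadic decomposition is therefore unnecessary, and as written does not close. (It \emph{can} be repaired: use $|v[f_i]|\lec\|f\|_{L^\infty}\rho_i$ so that the offending contribution from piece $i$ is $\lec K\delta\|f\|_{L^\infty}2^{-i}$ and the sum over $i$ is geometric---but this is more work for the same result.)
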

\begin{proof}
The proof follows in the same way as Lemma~\ref{lem_v_loglip} above, except that the inequality $|\widehat{x_1}-\widehat{x_2}|\lec \delta /R$ is replaced by $|\widehat{x_1}-\widehat{x_2}|\lec K \delta /R$.
\end{proof}

This allows us to obtain some improved control over the $L^{\infty}$ bounds of velocities produced by $\frac{2\pi}{N}-$periodic functions.

\begin{lemma}\label{linftyN}
If $\supp \omega \subset \Omega \coloneqq B(0,R) \setminus B(0,R/K) $ for some $K>1$ and $\omega $ is $2\pi/N$-periodic then
\eqnb\label{vr_NlogN}
\| v_r [\omega ] \|_{L^\infty (\Omega )} \lec R \| \omega \|_{L^\infty }N^{-1} \log N.
\eqne
\end{lemma}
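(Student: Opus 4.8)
The plan is to exploit the $2\pi/N$-periodicity of $\omega$ together with the log-Lipschitz estimate from Corollary~\ref{loglip}. The key observation is that for a $2\pi/N$-periodic function the average $A(v_r[\omega])(r)$ vanishes identically. Indeed, $v_r[\omega]$ is itself $2\pi/N$-periodic (since the Biot--Savart kernel is rotation-equivariant and $\omega$ is periodic), and by incompressibility the flux of $v$ through any circle of radius $r$ centered at the origin equals zero; writing this flux in polar coordinates gives $\int_{-\pi}^\pi v_r[\omega](r,\alpha)\,r\,\d\alpha=0$, i.e. $A(v_r[\omega])(r)=0$ for every $r$. Hence for each fixed $r$ the function $\alpha\mapsto v_r[\omega](r,\alpha)$ has zero mean on $(-\pi,\pi]$.

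Now fix $x=(r,\alpha)\in\Omega$. Because the mean of $v_r[\omega](r,\cdot)$ over a period of length $2\pi/N$ is zero (periodicity plus the vanishing global average), there exists a point $x'=(r,\alpha')$ with $|\alpha-\alpha'|\le \pi/N$ such that $v_r[\omega](x')=0$. Then $|x-x'|=r|\alpha-\alpha'| \le \pi r/N \lec R/N$, and Corollary~\ref{loglip} applied to $f=\omega$ gives
\[
|v_r[\omega](x)| = |v_r[\omega](x)-v_r[\omega](x')| \lec K\|\omega\|_{L^\infty}\,|x-x'|\left(1+\log\frac{R}{|x-x'|}\right) \lec K\|\omega\|_{L^\infty}\,\frac{R}{N}\left(1+\log N\right).
\]
Taking the supremum over $x\in\Omega$ yields $\|v_r[\omega]\|_{L^\infty(\Omega)}\lec K R\|\omega\|_{L^\infty} N^{-1}\log N$, which is \eqref{vr_NlogN}. (Here I am treating $K$ as a fixed constant, as in the statement, so it is absorbed into $\lec$; if one wants to track it explicitly the bound carries the factor $K$.)

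The main technical point to get right is the existence of the nearby zero $x'$ of $v_r[\omega](r,\cdot)$ within angular distance $\pi/N$: this requires knowing that the restriction of $v_r[\omega]$ to each circle is continuous (which follows from the log-Lipschitz bound itself) and has vanishing integral over every arc of angular length $2\pi/N$. The latter is where periodicity is essential — it is not enough that the full average over $(-\pi,\pi]$ vanish; we need the average over a single period, which equals $N$ times the $\frac{2\pi}{N}$-arc integral, to be zero, so that a zero must occur in each such arc by the intermediate value theorem. I expect verifying $A(v_r[\omega])\equiv 0$ (equivalently, the zero-flux identity) and the periodicity of $v_r[\omega]$ to be the only places needing care; the rest is a direct substitution into Corollary~\ref{loglip}. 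One should also note $|x-x'|$ is bounded by $\pi R/N$ so that $\log(R/|x-x'|)\gec 0$ and the logarithm term is genuinely $\lec \log N$ on $\Omega$ (using $|x-x'|\gec cR/N$ is not needed for the upper bound, only that $|x-x'|\le \pi R/N$, since $t(1+\log(R/t))$ is increasing for small $t$).
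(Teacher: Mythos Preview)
Your proof is correct and follows essentially the same approach as the paper: observe that $A(v_r[\omega])=0$ by incompressibility, use the $2\pi/N$-periodicity of $v_r[\omega]$ to locate a nearby zero on the same circle, and then apply Corollary~\ref{loglip}. Your version supplies more detail (the intermediate value argument for the zero, the explicit handling of $K$, and the monotonicity remark for $t(1+\log(R/t))$) than the paper's terse sketch, but the argument is the same.
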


Given Corollary \ref{loglip}, we can prove \eqref{vr_NlogN} by first noting that 
\[
A(v_r [ \omega ])=0
\]
for any $\omega$ (by incompressibility). 
Moreover, $\omega $ is $2\pi/N$-periodic, which implies the same for $v_r [\omega ]$. This means that, given $x\in \Omega $ there exists $y\in \Omega $ such that $v_{r}[\omega ] (y) =0$ and $|x-y|\sim C\diam \, (\Omega ) /N$. Thus an application of Corollary \ref{loglip} gives
\eqnb\label{calc2}
\begin{split}
| v_r [\omega ] (x) |& = | v_r [\omega  ] (x)- v_r [\omega ] (y) | \lec \| \omega  \|_{L^\infty} |x-y| \left( 1 + \log \frac{R}{|x-y|} \right) \\
&\lec R \| \omega \|_{L^\infty} N^{-1} \log N,
\end{split}
\eqne
as required.
\subsection{An $\exp(- N)$ decay of the radial velocity of $2\pi/N$-periodic vorticities}
Here we show that a~compactly supported vorticity function that is $2\pi/N$-periodic generates a~velocity field whose radial part decays exponentially fast as $N\to \infty$.
\begin{lemma}\label{lem_exp_decay}
Let $\omega \in L^\infty (\R^2) $ be $2\pi/N$-periodic and such that $\supp\, \omega \subset B(0,a_2)\setminus B(0,a_1)$. Then 
\eqnb\label{vr}
|v_r [\omega ] (r,\alpha ) | \lec \frac{a_2-a_1}r \| \omega \|_{L^\infty } \ee^{- N}
\eqne
for $r\in [ 0, a_1^2/12a_2]\cup [ 12  a_2,\infty )$. 
\end{lemma}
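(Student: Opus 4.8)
The plan is to exploit the $2\pi/N$-periodicity of $\omega$ to extract exponential decay from the Biot-Savart kernel, by noting that averaging a smooth periodic function against a kernel that is slowly varying on the scale $1/N$ produces cancellation that is exponentially small in $N$. Concretely, fix $x=(r,\alpha)$ with either $r\leq a_1^2/(12a_2)$ or $r\geq 12 a_2$, so that $x$ is far (by a definite multiplicative factor) from the annulus $\{a_1\leq |y|\leq a_2\}$ where $\omega$ is supported. The radial velocity is
\[
v_r[\omega](x)=\widehat{x}\cdot\int_{\R^2}\frac{(x-y)^\perp\,\omega(y)}{|x-y|^2}\,\d y
=\int_{a_1}^{a_2}\!\!\int_{-\pi}^{\pi} K(r,s,\alpha-\theta)\,\omega(s,\theta)\,s\,\d\theta\,\d s,
\]
where $K(r,s,\phi)\coloneqq \widehat{x}\cdot (x-y)^\perp/|x-y|^2$ written in polar coordinates depends on the angles only through the difference $\phi=\alpha-\theta$. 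The key point is that, because $x$ is separated from the support by a factor bounded away from $1$, the function $\phi\mapsto K(r,s,\phi)$ extends holomorphically to a strip $|\mathrm{Im}\,\phi|<c$ for some absolute constant $c>0$ (the denominator $|x-y|^2 = r^2+s^2-2rs\cos\phi$ stays away from zero when $r/s$ or $s/r$ is bounded by, say, $1/12$, even after complexifying $\phi$ — here the factor $12$ is exactly what buys the uniform strip width).

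The main step is then a Paley--Wiener / contour-shift argument in the angular variable. For each fixed $s$, write the inner integral $\int_{-\pi}^\pi K(r,s,\alpha-\theta)\,\omega(s,\theta)\,\d\theta$ using the Fourier series of the $2\pi/N$-periodic function $\theta\mapsto\omega(s,\theta)$: its nonzero Fourier modes are all of the form $\e^{\ri k N\theta}$ with $k\in\Z$. Pairing with the Fourier coefficients $\widehat{K}(r,s,m)=\frac1{2\pi}\int_{-\pi}^\pi K(r,s,\phi)\e^{-\ri m\phi}\,\d\phi$, only the modes $m=kN$ survive, and the analyticity of $K$ in the strip of width $c$ gives $|\widehat{K}(r,s,m)|\lec \|K(r,s,\cdot)\|_{L^\infty(\text{strip})}\,\e^{-c|m|}$ by shifting the contour $\phi\mapsto\phi\pm\ri c$. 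Since $\|K(r,s,\cdot)\|_\infty$ on the strip is $\lec 1/\mathrm{dist}(x,\mathrm{supp}\,\omega)\lec 1/r$ in the exterior regime (and $\lec 1/s \lec 1/a_1 \lec a_2/(r a_1)\cdot(\dots)$, adjusted appropriately, in the interior regime), summing the geometric series over $k\neq 0$ yields
\[
\Big|\int_{-\pi}^\pi K(r,s,\alpha-\theta)\,\omega(s,\theta)\,\d\theta\Big|
\lec \frac{\|\omega\|_{L^\infty}}{r}\,\e^{-cN}.
\]
Then integrating in $s$ over $[a_1,a_2]$ and using $\int_{a_1}^{a_2} s\,\d s \lec a_2(a_2-a_1)$, and absorbing the geometric factors into the constant (replacing $c$ by $1$ at the cost of enlarging the constant, or rather one checks $c\geq 1$ follows from the factor $12$; if not, one simply shrinks the admissible range slightly — but the constant $12$ in the statement is chosen precisely so that $c$ can be taken $\geq 1$), we obtain the claimed bound $|v_r[\omega](r,\alpha)|\lec \frac{a_2-a_1}{r}\|\omega\|_{L^\infty}\e^{-N}$.

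I expect the main obstacle to be the careful bookkeeping of the analyticity strip width as a function of the separation ratio, and in particular verifying that the ratio $12$ appearing in the hypothesis is enough to guarantee strip width at least $1$ (so that $\e^{-cN}$ can legitimately be written $\e^{-N}$). This amounts to checking that $r^2+s^2-2rs\cos(\phi+\ri c)$ stays bounded away from $0$ for all $a_1\leq s\leq a_2$, all $r$ in the allowed range, and all real $\phi$, when $c=1$; since $|\cos(\phi+\ri c)|\leq \cosh c$, one needs $r^2+s^2>2rs\cosh 1$, i.e. $\frac rs+\frac sr>2\cosh 1\approx 3.086$, which holds comfortably once $r/s\leq 1/12$ or $s/r\leq 1/12$ (indeed $12+1/12>3.086$). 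One must also handle the two regimes (interior $r\leq a_1^2/12a_2$ and exterior $r\geq 12a_2$) separately when estimating $\mathrm{dist}(x,\mathrm{supp}\,\omega)$ and the sup of $|K|$ on the strip, but in both the bound $|K|\lec 1/r$ can be arranged after noting $a_1\leq a_2$; the interior case uses $\mathrm{dist}(x,\mathrm{supp}\,\omega)\geq a_1-r\geq a_1/2$ together with $r\leq a_1^2/12a_2\leq a_1/12$, giving $1/\mathrm{dist}\lec 1/a_1\lec a_2/(a_1 r)\cdot r/a_2$, which after feeding through the $s$-integration still produces the stated $(a_2-a_1)/r$ prefactor (possibly after mildly adjusting the constant). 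The rest is routine.
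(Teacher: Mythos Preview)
Your approach is essentially the same as the paper's: both expand $\omega$ in angular Fourier modes (only multiples of $N$ survive by periodicity, and the zero mode gives no $v_r$), then use analyticity of the Biot--Savart kernel $\sin\phi/(r^2+s^2-2rs\cos\phi)$ in a complex strip in $\phi$ together with a contour shift to get exponential decay of its Fourier coefficients, and finally verify that the separation factor $12$ guarantees strip width at least $1$. The paper carries this out by explicitly locating the poles of $f(z)=\sin z/(B+1-\cos z)$ at $\mathrm{Im}\,z=\pm\log(1+B+\sqrt{B^2+2B})$ and shifting to half that height (checking $B\coloneqq (r'-r)^2/(2r'r)\geq 5$ suffices), whereas you phrase it as a Paley--Wiener bound with the cruder strip condition $r/s+s/r>2\cosh 1$; both verifications go through under the hypothesis on $r$.
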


\begin{proof}
First note that if $\omega (r,\alpha ) = g(r) \sin(N\alpha)$ then 
\eqnb\label{vr_formula}
v_r [\omega ] (r,\alpha ) =  \cos (N\alpha ) \mathrm{p.v.} \int_{0}^\infty \int_{-\pi}^\pi (r')^2 \frac{\sin \alpha' g(r' ) \sin (N\alpha' ) }{ (r'-r)^2 + 2r' r (1-\cos \alpha' ) } \d \alpha' \d r' ,
\eqne
 and a~similar formula holds if $\sin(N\alpha)$ is replaced by $\cos(N\alpha)$.

In order to analyze \eqref{vr_formula}, we first consider  
\[
f(z) \coloneqq \frac{\sin z}{B + (1-\cos z )},
\]
where $B>0$, and we note that $f$ is holomorphic in $\CC \setminus \{ x+iy \colon x=2k\pi , y = -\log (1+B\pm \sqrt{B^2+2B } ) \} $ and $2\pi$-periodic in the real direction. Thus, by the Cauchy Theorem,
\[
\left| \int_{-\pi }^\pi f(z) \ee^{iNz} \d z  \right| = \left| \int_{-\pi }^\pi f(i\gamma + z) \ee^{iN(i\gamma +z)} \d z  \right| \leq 2\pi \ee^{-\gamma N } \sup_{\R\times \{ |\im |\leq \gamma \} } |f | ,
\]
where
\eqnb\label{def_of_gamma}
\gamma \coloneqq \frac12  \log (1+B+ \sqrt{B^2+2B } )  ,
\eqne
and we used the fact that $-\log (1+B-\sqrt{B^2+2B})=\log (1+B+\sqrt{B^2+2B})$. Since for $y \in [-\gamma , \gamma ]$ we have $|\cos z| \leq \cosh y \leq \ee^\gamma  \leq \sqrt{1+B+ \sqrt{B^2+2B } }  $, we obtain
\[
|B+1-\cos z| \geq B+1 - \ee^\gamma \geq B+1 - \sqrt{1+B+\sqrt{B^2 +2B }} \geq 1
\]
for such $y$ and $B\geq 5$. Hence $|f| \lec 1 $ for such $y$, $B$. In particular, since also $\gamma \geq 1$ for $B\geq 5$, we obtain that
\eqnb\label{exp_prebound}
\left| \int_{-\pi }^\pi f(x) \sin (Nx ) \d x  \right| \leq 2\pi \ee^{- N}
\eqne
for such $B$.\\

Given $r>0$ we expand $\omega (r,\alpha )$ into Fourier series in $\alpha$. Due to to $2\pi/N$-periodicity we have
\[
\omega (r,\alpha )= g(r,0) + \sum_{k\geq N} \left( g(r,k) \cos (k\alpha ) + h(r,k) \sin (k\alpha ) \right) ,
\]
where 
\[
g(r,k) + i h(r,k) \coloneqq  \frac1\pi  \int_{-\pi}^\pi \omega (r,\alpha ) \ee^{ik\alpha  } \d \alpha, \qquad g(r,0) \coloneqq \frac{1}{2\pi } \int_{-\pi }^\pi  \omega (r,\alpha ) \d \alpha.
\]
Clearly $|g(r,k)|,|h(r,k)|\leq 2 \| \omega \|_{L^\infty}$ for each $r,k$. Moreover, since  $r\in [ 0, a_1^2/12a_2]\cup [12 a_2,\infty )$, a~direct computation shows that  
\[
B\coloneqq \frac{(r'-r)^2}{2r' r} \geq 5
\]
for each $r'\in [a_1  , a_2]$.  Note that the term $g(r,0)$ does not contribute to $v_r[\omega ]$. On the other hand, for each  $k\geq N$, we can apply \eqref{exp_prebound} (and an analogous estimate for $\cos$) to obtain 
\[
\begin{split}
|v_r [\omega ] (r,\alpha )|&\leq \int_{a_1}^{a_2} \frac{r'}{2r} \sum_{k\geq N} \left( \left| \int_{-\pi }^\pi  \frac{\sin \alpha' g(r',k) \cos (k\alpha' )}{B+ 1-\cos \alpha'} \d \alpha'  \right| \right.\\
&\hspace{3cm}+\left. \left| \int_{-\pi }^\pi  \frac{\sin \alpha' h(r',k) \sin (k\alpha' )}{B+ 1-\cos \alpha'} \d \alpha'  \right| \right) \d r'\\
&\lec \| \omega \|_{L^\infty}\int_{a_1}^{a_2} \frac{r'}{2r} \left( \sum_{k\geq N} \ee^{-k} \right)  \d r' \lec \frac{a_2-a_1}{r} \| \omega \|_{L^\infty} \ee^{-N},
\end{split}
\]
as required. 
\end{proof}

\subsection{Sobolev norms for high frequency ansatz}\label{sec_sob_norm_pseudosol}
In this section we prove a~technical lemma that allows us to bound from above a~negative-order homogeneous Sobolev norm of certain functions supported in an annulus in $\R^2$.
\begin{lemma}\label{lem_H-delta}
Given $\epsilon\in (0,1)$, $\delta \in (0,\epsilon )$, and $f\in C^2 ([1/2,4])$ with $f'>0$ in $[1/2,4]$, there exist $C, K_0\geq 1$ such that 
\[
\omega_{K}(r,\alpha)\coloneqq g(r)\cos(N\alpha-Kf(r)+f_{err}(r)) 
\]
satisfies
\[
\| \omega_{K} \|_{\dot{H}^{-\delta}}\leq C K^{-\delta}\| g\|_{C^{1}}
\]
for every $g \in C^2_c ((1/2,4) )$, $K\geq K_0$, $N\in\N$ and $f_{err}\in C^1 ([1/2,4] )$ such that $\| f_{err} \|_{C^1} \leq K^{1-\epsilon }$.
\end{lemma}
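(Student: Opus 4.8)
The plan is to use the formula \eqref{formula_for_Lambda_-delta} for $\Lambda^{-\delta}$, so that $\| \omega_K \|_{\dot H^{-\delta}}^2 = \int \omega_K \Lambda^{-2\delta}\omega_K$ can be rewritten as a double integral over the annulus of the kernel $|x-y|^{-2+2\delta}$ against $\omega_K(x)\omega_K(y)$. Writing $\cos(N\alpha - Kf(r) + f_{err}(r))$ in terms of complex exponentials, the key observation is that the dominant factor $\ee^{\pm i K f(r)}$ is highly oscillatory in $r$ when $K$ is large, so the double integral should be small by a nonstationary-phase/integration-by-parts argument in the radial variable. The monotonicity hypothesis $f'>0$ on $[1/2,4]$ is exactly what makes this work: $f$ has no critical points on the support of $g$, so after changing variables the phase is genuinely oscillating.

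Concretely, I would first reduce to estimating, for each pair of signs, a quantity of the form
\[
I_\pm = \int_{1/2}^4\int_{1/2}^4 \int_{-\pi}^\pi\int_{-\pi}^\pi \frac{g(r)g(\rho)\, \ee^{\pm iK(f(r)\pm f(\rho))}\, \ee^{i(\cdots)}}{\big((r-\rho)^2 + 2r\rho(1-\cos(\alpha-\gamma))\big)^{1-\delta}}\, r\rho \, \d\alpha\, \d\gamma\, \d r\, \d\rho,
\]
together with the corresponding terms where the $f_{err}$ and $N\alpha$ contributions are absorbed into amplitudes/phases. The ``$++$'' and ``$--$'' terms, where the radial phase is $K(f(r)+f(\rho))$, are the easy ones: the phase derivative in $r$ (or in $\rho$) is $\gtrsim K$ uniformly, so two integrations by parts in $r$ give a gain of $K^{-2}$, easily beating the target $K^{-2\delta}$ — one must only check that differentiating the singular kernel $|x-y|^{-2+2\delta}$ in $r$ produces at worst $|x-y|^{-3+2\delta}$, which is still integrable against the measure in two dimensions after accounting for the $\d\alpha\,\d\gamma$ integration (this is where the restriction $\delta<\epsilon<1$, and in particular $\delta$ small, matters). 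The ``$+-$'' cross terms, with radial phase $K(f(r)-f(\rho))$, are the delicate ones, because the phase is stationary on the diagonal $r=\rho$; there one should split into a near-diagonal region $|r-\rho|\lesssim K^{-1}$, handled by crude size estimates (the kernel is integrable and the region has measure $\lesssim K^{-1}$, giving in fact a better-than-needed bound), and an off-diagonal region $|r-\rho|\gtrsim K^{-1}$, where $|f(r)-f(\rho)|\gtrsim |r-\rho|$ by the mean value theorem and monotonicity, so one again integrates by parts, now picking up a factor $(K|r-\rho|)^{-1}$ per integration; one integration suffices and the resulting $\int |r-\rho|^{-1}(\cdots)$ is logarithmically divergent at worst, which combined with the $K^{-1}$ and the near-diagonal contribution yields $O(K^{-1}\log K) \lesssim K^{-\delta}$.

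The main obstacle, and the reason the statement is phrased with an error term $f_{err}$ satisfying only $\|f_{err}\|_{C^1}\le K^{1-\epsilon}$, is controlling the derivative of the full phase after including $f_{err}$: when integrating by parts in $r$ we differentiate $\ee^{\pm i(Kf(r) - f_{err}(r))}$, and the $f_{err}'$ contribution is of size up to $K^{1-\epsilon}$, which is $o(K)$ and hence harmless — the total radial phase derivative is still $\sim K$ — but one must carry this through carefully, and also track that each integration by parts that hits $g$ costs only $\|g\|_{C^1}$ (not higher derivatives), which is why the final bound involves $\|g\|_{C^1}$ rather than $\|g\|_{C^2}$; the $C^2$ regularity of $f$ is used only to make the single integration by parts in the cross term legitimate (one needs $f''$ bounded to differentiate $1/f'$). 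A secondary technical point is that the angular integrals $\int_{-\pi}^\pi\int_{-\pi}^\pi (\cdots)\,\d\alpha\,\d\gamma$ must be performed (or at least estimated) first, uniformly in $r,\rho$, to reduce everything to a clean one-dimensional oscillatory integral in $(r,\rho)$ with an amplitude that is $C^1$ in $r$ with the right bounds; this is routine since the angular integral of $((r-\rho)^2 + 2r\rho(1-\cos\theta))^{-1+\delta}$ behaves like $\log(1/|r-\rho|)$ for $\delta$ near $0$ and is otherwise bounded, but it needs to be stated. Assembling the pieces, summing the finitely many sign combinations, and taking square roots gives $\|\omega_K\|_{\dot H^{-\delta}} \le C K^{-\delta}\|g\|_{C^1}$ for all $K\ge K_0$.
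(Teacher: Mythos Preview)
Your overall plan --- express $\|\omega_K\|_{\dot H^{-\delta}}^2$ as the double integral $\langle\omega_K,\Lambda^{-2\delta}\omega_K\rangle$, reduce the angular integration, and then run a nonstationary-phase argument in the radial variables --- is a legitimate alternative to the paper's route, which instead bounds $\Lambda^{-\delta}\omega_K(x)$ \emph{pointwise} (by partitioning the $r'$-integration into $O(K)$ intervals of length $\sim K^{-1}$ and exploiting cancellation over each near-period of $Kf$) and only afterwards takes the $L^2$ norm. The key difference is that by working with $\Lambda^{-\delta}$ rather than $\Lambda^{-2\delta}$, the paper needs the hypothesis $\delta<\epsilon$ only once, whereas your squared formulation effectively doubles the exponent and forces you into extra work.

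Your sketch, however, contains concrete errors. First, the angular integral $\int_{-\pi}^\pi\big((r-\rho)^2+2r\rho(1-\cos\theta)\big)^{-(1-\delta)}\d\theta$ behaves like $|r-\rho|^{-1+2\delta}$ for $r,\rho\in[1/2,4]$, not like $\log(1/|r-\rho|)$; as a consequence the near-diagonal region $|r-\rho|\le K^{-1}$ contributes exactly $\sim K^{-2\delta}\|g\|_{L^\infty}^2$, which is the target bound and not ``better than needed''. Second, a single integration by parts in $r$ against $\ee^{iKf(r)}$ produces the factor $(Kf'(r))^{-1}\sim K^{-1}$, not $(K|r-\rho|)^{-1}$. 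Third, and this is the real gap: when $\partial_r$ lands on $\ee^{if_{err}(r)}$ you pick up $|f'_{err}|\le K^{1-\epsilon}$, and after integrating the surviving amplitude $\lec |r-\rho|^{-1+2\delta}$ over $|r-\rho|\ge K^{-1}$ you obtain a contribution of order $K^{-\epsilon}\|g\|_{L^\infty}^2$ to the \emph{squared} norm. This is $\le K^{-2\delta}$ only when $2\delta\le\epsilon$, which the hypothesis $\delta\in(0,\epsilon)$ does not give. The fix is a second integration by parts, now in $\rho$, on that specific term: the worst resulting piece carries both $f'_{err}$ factors and is of order $K^{-2\epsilon}\le K^{-2\delta}$, while the others are $O(K^{-\epsilon-2\delta})$. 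So your statement ``one integration suffices'' is false as written, though the approach can be rescued. The paper's pointwise argument avoids all of this: one round of cancellation in $r'$ yields $|\Lambda^{-\delta}\omega_K|\lec(K^{-\delta}+K^{-\epsilon})\|g\|_{C^1}\lec K^{-\delta}\|g\|_{C^1}$ directly.
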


\begin{proof}

We first show that for $r\in(\frac14,6)$
\eqnb\label{first_show} 
|\Lambda^{-\delta}\omega_{K}|\leq C K^{-\delta}\|g\|_{C^1}.
\eqne
To this end we note that rewriting the definition \eqref{formula_for_Lambda_-delta} of $\Lambda^{-\delta }\omega_K$ in polar coordinates gives
$$ \Lambda^{-\delta}\omega_{K}(r,\alpha)=C_{\delta}\int_{-\pi}^{\pi}\int_{0}^{\infty} \frac{\omega_{K}(r',\alpha')}{|(r-r')^2+2rr'(1-\cos(\alpha-\alpha'))|^{\frac{2-\delta}{2}}}r'\d r'\,\d\alpha'.$$
Using the change of variables $h=r'-r$, $\tilde{\alpha}=\alpha'-\alpha$, we can estimate the integral over the region $\{ |\alpha' - \alpha |\leq 1/K \}$, by noting that $r,r+h=O(1)$, which implies that
\eqnb\label{101}
\begin{split} &\left|\int_{\alpha-\frac{1}{K}}^{\alpha +\frac{1}{K}}\int_{0}^{\infty} \frac{\omega_{K}(r',\alpha')}{|(r-r')^2+2rr'(1-\cos(\alpha-\alpha'))|^{\frac{2-\delta}{2}}}r'\,\d r'\d \alpha' \right| \\
&\hspace{3cm}\leq C\| g \|_{L^{\infty}}\int_{-\frac{1}{K}}^{\frac{1}{K}}\int_{-\infty}^{\infty} \frac{1}{|h^2+C\tilde{\alpha}^2|^{\frac{2-\delta}{2}}}\d h \,\d\tilde{\alpha}\\
&\hspace{3cm}\leq C\| g \|_{L^{\infty}}\int_{-\frac{1}{K}}^{\frac{1}{K}}\int_{|\alpha|}^{\infty} \frac{1}{|h|^{2-\delta}}\d h\,\d \tilde{\alpha}\\
&\hspace{3cm}\leq C\| g \|_{L^{\infty}}\int_{-\frac{1}{K}}^{\frac{1}{K}} |\tilde \alpha |^{-(1-\delta )} \d\tilde \alpha\leq C \| g \|_{L^{\infty}}K^{-\delta},
\end{split}
\eqne
as claimed. 

As for $|\tilde \alpha |>1/K$, we first consider $h\in [0,4 ]$ and we divide this interval into $O(K)$ pieces of the form $[a,a+2\pi/(K f'(a+r))]$ and integrate by parts on each of them. Namely, given $a\in [0,4]$ we set 
\[
\begin{split}
u(\tilde h) &\coloneqq \int_a^{\tilde h} (r+h) g(r+h)\cos(N\alpha'-Kf(r+h)+f_{err}(r+h)) \d h,\\
v (h) &\coloneqq {|h^{2}+2r(r+h)(1-\cos \tilde{\alpha})|^{-\frac{2-\delta}{2}}},
\end{split}
\]
so that 
\[
\begin{split}
|v'(h)|&\lec \frac{| 2h +2r (1-\cos \tilde \alpha  )|}{ \left| h^{2}+2r(r+h)(1-\cos \tilde{\alpha} ) \right|^{\frac{4-\delta }{2}}} \lec \left| h^{2}+2r(r+h)(1-\cos \tilde{\alpha} ) \right|^{-\frac{3-\delta }{2}} ,\\
u'(h)&=(r+h)g(r+h)\cos(N\alpha'-Kf(r+h)+f_{err}(r+h)),
\end{split}
\]
$u(a)=0$, and we can estimate $u(h)$ for each $h\in (a,a+2\pi /(K f' (a+r))]$ by the brutal bound
\[
|u(h)| \lec K^{-1} \| g \|_{L^\infty} .
\]
This gives that

\[
\begin{split}
    &\left| \int_{a}^{a+\frac{2\pi}{Kf'(a+r)}} \frac{g(r+h)\cos(N\tilde{\alpha}-Kf(r+h)+f_{err}(r+h))}{|h^2+2r(r+h)(1-\cos(\tilde{\alpha}))|^{\frac{2-\delta}{2}}}(r+h)\d h\right|\\
    &=\left| \int_{a}^{a+\frac{2\pi}{Kf'(a+r)}} u'(h)v(h) \d h  \right| \\
    &\lec \frac{\| g \|_{L^\infty } }{K}  \int_{a}^{a+\frac{2\pi}{Kf'(a+r)}} |v'(h) | \d h  \\
    &\qquad+ v\left( {a+\frac{2\pi}{Kf'(a+r)}}\right) \left|u\left({a+\frac{2\pi}{Kf'(a+r)}}\right)\right|  \\
    &\lec \frac{\|g\|_{L^{\infty}}}{K}\int_{a}^{a+\frac{2\pi}{Kf'(a+r)}} {|h^2+C \tilde{\alpha}^2|^{-\frac{3-\delta}{2}}}\d h+\frac{ \|g\|_{C^{1}}K^{-1-\epsilon}}{|(a+\frac{2\pi}{Kf'(a+r)})^2+C\tilde{\alpha}^2|^{\frac{2-\delta}{2}}},
    \end{split}
    \]
where we used the fact that $1-\cos \tilde \alpha \gtrsim (\tilde \alpha )^2$ as well as the fact that
\[
\begin{split}
&\left|u\left({a+\frac{2\pi}{Kf'(a+r)}}\right)\right|\\
 &=\left|\int_{a}^{a+\frac{2\pi}{Kf'(a+r)}} \Big( g(r+h)\cos(N\alpha' -Kf(r+h)+f_{err}(r+h))\right.\\
    &\hspace{1cm}\left.- g(r)\cos (N\alpha' -Kf(r+a)-hKf'(r+a)+f_{err}(r+a))\Big)\d h\right| \\
   & \lec \| g\|_{C^{1}}K^{-1-\epsilon},
\end{split}
\]
by adding and subtracting the mixed terms, and noting that the difference of the $g$'s gives $C\| g \|_{C^1} K^{-2}$, the second order Taylor expansion of $f$ gives the bound $C\| g \|_{L^\infty }K^{-2}$, and the assumption on $f_{err}$ gives $C\| g \|_{L^\infty } K^{-1-\epsilon }$.

Thus, letting $a\coloneqq h_i$, where $h_{0}\coloneqq 0$, $h_{i+1}\coloneqq h_{i}+\frac{2\pi}{Kf'(h_{i}+r)}$ for $i=0,\ldots , i_0$, where $i_{0}$ is the largest integer such that $h_{i_{0}}\leq 4$, we obtain that $h\in (h_i,h_{i+1})$ for some $i\in \{ 0, \ldots , {i_0} \}$ whenever $r+h \in \supp\, g \cap [r,\infty )$, and
\[
\begin{split}
    &\left| \int_{0}^{4} \frac{g(r+h)\cos(N\alpha' -Kf(r+h)+f_{err}(r+h))}{|h^2+2r(r+h)(1-\cos\tilde{\alpha})|^{\frac{2-\delta}{2}}}(r+h)\d h\right| \\
    &\hspace{1cm}\lec \sum_{i=0}^{i_{0}}\left( \frac{\|g\|_{L^{\infty}}}{K}\int_{h_{i}}^{h_{i+1}} \frac{1}{|h^2+C \tilde{\alpha}^2|^{\frac{3-\delta}{2}}}\d h+\frac{ \|g\|_{C^1}  K^{-1-\epsilon}}{|h_{i+1}^2+C\tilde{\alpha}^2|^{\frac{2-\delta}{2}}}\right)\\
    &\hspace{1cm}\lec \frac{\|g\|_{L^{\infty}}}{K}\int_{0}^{4+\frac1K} \frac{1}{|h^2+C \tilde{\alpha}^2|^{\frac{3-\delta}{2}}}\d h+\frac{\|g\|_{C^1}K^{-\epsilon}}{|\tilde{\alpha}|^{1-\delta}}
\end{split}
\]
and a~similar computation can be done for $h\in (-(r-1/8) ,0]$, which allows us to cover $r+h\in \supp\, g \cap (0,r) $. With this, in particular

\eqnb\label{102}
\begin{split}
    &\left|\int_{\pi\geq |\tilde{\alpha}|\geq\frac{1}{K}} \int_{-r+\frac18}^{4} \frac{g(r+h)\cos(N\alpha' -Kf(r+h)+f_{err}(r+h))}{|h^2+2r(r+h)(1-\cos(\tilde{\alpha}))|^{\frac{2-\delta}{2}}}(r+h)\d h \d\tilde \alpha \right|\\
    &\hspace{1cm}\lec \int_{\pi\geq |\tilde{\alpha}|\geq\frac{1}{K}}\left(\frac{\|g\|_{L^{\infty}}}{K}\int_{0}^{4+\frac1K} \frac{1}{|h^2+C \tilde{\alpha}^2|^{\frac{3-\delta}{2}}}\d h+\frac{\|g\|_{C^1}K^{-\epsilon}}{|\tilde{\alpha}|^{1-\delta}} \right)\d \tilde{\alpha}\\
    &\hspace{1cm}\lec \|g\|_{C^1}\int_{\pi\geq |\tilde{\alpha}|\geq\frac{1}{K}} \left(\frac{1}{K|\tilde{\alpha}|^{2-\delta}}+\frac{1}{K^{\epsilon}|\tilde{\alpha}|^{1-\delta}}\right)\d\tilde{\alpha}\\
    &\hspace{1cm}\lec \|g\|_{C^1}(K^{-1}K^{1-\delta}+K^{-\epsilon})\\
    &\hspace{1cm}\lec \|g\|_{C^1}K^{-\delta}.
\end{split}
\eqne

Next we need to show some bounds for $r\leq \frac14$ and $r\geq 6$. For $r\in (0,1/4)$ we need $h\in (1/4,4)$ (so that $r+h \in \supp \, g$). Thus letting $h_{0}\coloneqq \frac{1}{4}$, $h_{i+1}\coloneqq h_{i}+\frac{2\pi}{Kf'(r+h_i )}$ and letting  $i_{0}\in \N $ be the largest integer such that $r+h_{i_{0}}\leq 4$, and applying integration by parts as before, we have
\[
\begin{split}
    &\left| \int_{h_{i}}^{h_{i+1}} \frac{g(r+h)\cos(N\alpha' -Kf(r+h)+f_{err}(r+h))}{|h^2+2r(r+h)(1-\cos \tilde{\alpha})|^{\frac{2-\delta}{2}}}(r+h)\d h\right|\\
    &\lec \frac{\|g\|_{L^{\infty}}}{{K}}\int_{h_{i}}^{h_{i+1}} \frac{1}{|h^2+Cr \tilde \alpha^2|^{\frac{3-\delta}{2}}}\d h'\\
    &\hspace{0.1cm}+\left( \frac{1}{|h_{i+1}^2+Cr\tilde{\alpha}^2|^{\frac{2-\delta}{2}}}\int_{h_{i}}^{h_{i+1}} g(r')\cos(N\alpha'-Kf(r+h)+f_{err}(r+h))\d h\right)\\
    &\lec \frac{\|g\|_{L^{\infty}}}{K}\int_{h_{i}}^{h_{i+1}} \frac{1}{|h^2+Cr \tilde{\alpha}^2|^{\frac{3-\delta}{2}}}\d h +\frac{\|g\|_{C^1}K^{-1-\epsilon}}{|h_{i+1}^2+Cr\tilde{\alpha}^2|^{\frac{2-\delta}{2}}}.\end{split}
    \]
Thus, summing in $i$, and integrating in $\tilde \alpha \in \{ |\tilde \alpha | \in (1/K,\pi ) \}$ (recall that $\tilde\alpha = \alpha'-\alpha$), we obtain 
\eqnb\label{103} |\Lambda^{-\delta}(\omega_{K})(r,\alpha)|\lec\|g\|_{C^1}(K^{-1}+K^{-\epsilon})\lec \|g\|_{C^1} K^{-\delta}\quad \text{ for }r\in (0,1/4).\eqne

Similarly, for $r\in(6,\infty)$ we need $h\in (r-1/2,r-6)$, which gives the final bound of the form
\eqnb\label{104} |\Lambda^{-\delta}(\omega_{K})(r,\alpha)|\lec \|g\|_{C^1}\left( \frac{K^{-1}}{(r-4)^{3-\delta}}+\frac{K^{-\epsilon}}{(r-4)^{2-\delta}}\right) \lec \|g\|_{C^1} \frac{K^{-\delta}}{(r-4)^{2-\delta}}\eqne
for $r>6$.

Integrating the squares of the above pointwise estimates \eqref{101}--\eqref{104} on $\Lambda^{-\delta } \omega_K$ gives the claimed $L^2$ bound.
\end{proof}

\section{Initial conditions and growth for smooth functions}\label{sec_initial}

Here we prove Theorem~\ref{illpos}, that is we fix $\beta \in (0,1)$, $\beta'> (2-\beta )\beta/(2-\beta^2)$, and $K,T>0$ and we will construct $\omega_0 \in C_c^\infty (\R^2)$ such that $\| \omega_0 \|_{H^\beta } \leq 1$ and that the unique classical solution $\omega$ to the Euler equations admits growth $\| \omega \|_{H^{\beta'}} \geq K $ for $t\in [1/T,T]$.

To this end, we fix $\delta>0$ sufficiently small so that 

\eqnb\label{beta_delta}
\beta_\delta \coloneqq  \frac{(2+\delta-\beta )\beta }{2+\delta  - \beta^2} > \frac{(2-\beta)\beta }{2-\beta^2}
\eqne
satisfies $\beta_\delta < \beta'$.

We will consider radial functions $f(r),g(r)$ such that $g\in  C_c^\infty (\frac12,4)$,  $f\in C_c^\infty ((a,b)\cup(c,d))$  and 
\begin{itemize}
    \item $a\leq 10^{-4}$, $b\in (a,1/16)$, $d\geq 10^{3}$, $c\in (16,d)$, 
    \item $ \p_r \frac{ v_{\alpha}[f](r)}{r} \in(\frac{1}{M},M)$ for some $M>1$ when $r\in(\frac12,4)$, 
    \item $\| f \|_{H^1},\| g \|_{H^1} \leq 1/20$,
    \item $\int_{0}^{\infty}f(r)r\,\d r=0$.
\end{itemize}

A~function $g$ fulfilling the requirement is trivial to obtain, but we need to justify that $f$ with the required properties exists. For this, we first consider some arbitrary positive $\tilde{f}(r)\in C_{c}^{\infty} (10^{-5},10^{-4})$ and we study the quantity $\partial_{r}\frac{ v_{\alpha}[\lambda^2 \tilde{f}(\lambda \cdot )](r)}{r}$  for $r\in(\frac12,4)$.
We observe that
$$v_{\alpha}[\lambda^2 \tilde{f}(\lambda \cdot)](r)=\int_{-\pi}^{\pi}\int_{0}^{\infty}r'\frac{\lambda^2\tilde{f}(\lambda r')(r-r'\cos \alpha)}{ r^2+(r')^2- 2rr'\cos \alpha  }\d r'\,\d \alpha,$$
and so, due to the location of the support of $\tilde{f}(\lambda r)$ we have $v_{\alpha}(\tilde{f}(\lambda r))\geq 0$. Furthermore, for $r\in(\frac12,4)$,
\begin{align*}
    &\p_{r}v_{\alpha}[\lambda^2 \tilde{f}(\lambda \cdot)](r)\\
    &=\int_{-\pi}^{\pi}\int_{0}^{\infty}r'\tilde{f}(\lambda r')\left(\frac{\lambda^2}{ r^2+(r')^2- 2rr'\cos \alpha }\right.\\
    &\hspace{6cm}\left.-\frac{2\lambda^2(r-r'\cos \alpha)^2}{( r^2+(r')^2- 2rr'\cos \alpha )^2}\right)\d r'\,\d \alpha\\
\end{align*}
and hence
$$\lim_{\lambda\rightarrow \infty}\p_{r}v_{\alpha}[\lambda^2 \tilde{f}(\lambda \cdot)](r)=-\frac{2\pi}{r^2}\int_{0}^{\infty} \tilde{f}(s)s\,\d s.$$
Thus,  $\p_{r}v_{\alpha}[\lambda^2 f(\lambda \cdot)](r)<0$ for sufficiently large $\lambda$, and 
$$\p_r \frac{v_{\alpha}[\lambda^2 \tilde{f}(\lambda \cdot)](r)}{r}<0$$
for $r\in (\frac12,4)$, which implies that $-\lambda^2 \tilde{f}(\lambda r)$  gives us the desired effect on the velocity for $\lambda$ big, but this $f$ would clearly  have nonzero average. To compensate for that, we now consider $\lambda^{-2}\tilde{f}( \frac{r}{\lambda})$ for $\lambda\geq 10^{8} $. It is easy to check that, for any $r\in (\frac12,4)$,
\[ v_{\alpha}\left[ \lambda^{-2} \tilde{f}\left( \frac{\cdot}{\lambda}\right) \right] (r)\rightarrow 0\qquad \p_{r}v_{\alpha}\left[ \lambda^{-2} \tilde{f}\left( \frac{\cdot}{\lambda}\right)\right](r)\rightarrow 0
\]
as $\lambda\rightarrow \infty$, so that
$$-\lambda^2 \tilde{f}(\lambda \cdot)+\lambda^{-2} \tilde{f}( {\cdot}/{\lambda})$$
has the desired properties for the velocity and average value for sufficiently large $\lambda$.
 Then, multiplication by some small constant $c>0$ allows us to make the $H^{1}$ norm as small as we want.\\

Having fixed $f$, $g$, from now on any quantity depending on these two functions will be treated as a~universal constant. \\

Given $\lambda>0$ we now set 
\eqnb\label{initial_data}
\omega_{0} \coloneqq  \lambda^{1-\beta } f(\lambda r)+\lambda^{1-\beta } g(\lambda r) N^{-\beta } \cos (N\alpha ) ,
\eqne
 where  $N$ is related to $\lambda$ via 
\eqnb\label{lambda_vs_N}
\lambda^{2-2\beta + \delta } = N^\beta .
\eqne

Note that in particular $\|\omega_{0}\|_{H^{\beta}}\leq 1$ for any $\lambda\geq 1$. We denote by $\omega \colon \R^2\times [0,\infty)\rightarrow \R$ the unique solution of \eqref{vorticity} with initial data $\omega_{0}$.\\

We will use the pseudosolution
\[
\overline{\omega} \coloneqq \overline{\omega_{rad}} + \overline{\omega_{osc}}  ,
\]
where
\eqnb\label{pseudosol_defs}
\begin{split}
\overline{\omega_{rad}} &\coloneqq  \lambda^{1-\beta}f(\lambda r),\\
\overline{\omega_{osc}} &\coloneqq  \lambda^{1-\beta } g(\lambda r) N^{-\beta }\cos \left( N\left( \alpha-\frac{1}{r}\int_{0}^{t} v_{\alpha}\left[ A\omega \right] \d s\right) \right),
\end{split}
\eqne

to approximate the evolution of $\omega$.  We note that $\overline{\omega_{osc}} +\overline{\omega_{rad}}$ is the pseudosolution \eqref{pseudo_intro}, which was discussed heuristically in the introduction (Section~\ref{sec_ideas}). We also observe that  that $\overline{\omega_{rad}}$ is a stationary radial function and that $\overline{\omega_{osc}}$ depends on the solution itself, but only via its angular average. \\

Before we can control the difference $\omega - \overline{\omega}$ and show the rapid growth of $\overline{\omega}$, and so also of $\omega$, we need to prove some basic properties of $\omega$, which we discuss in Steps 1--3 below. We will keep in mind that $\lambda>0$ is a~large parameter that will be fixed in Step~4 (Section~\ref{sec_step4}), where we will prove the growth of the $H^{\beta'}$ norm.

\subsection{Step 1 - localization and control of $\omega_{rad}$}\label{sec_step1}

We decompose $\omega$ into two parts, one that is mostly composed of highly oscillatory terms $\omega_{osc}$ and one that remains mostly radial $\omega_{rad}$. Namely, if $\phi(x,t)$ is the flow map given by $v[\omega]$ we define
\[\begin{split}\omega_{rad}(x,t)&\coloneqq \omega_{rad}(\phi^{-1}(x,t),0),\qquad \omega_{rad}(x,0)=\lambda^{1-\beta}f(\lambda r),\\
\omega_{osc}(x,t)&\coloneqq \omega_{osc}(\phi^{-1}(x,t),0),\qquad \omega_{osc}(x,0)=\lambda^{1-\beta } g(\lambda r) N^{-\beta } \cos (N\alpha) .
\end{split}\]

Note that, with those definitions, we indeed have that

\eqnb\label{om_decomp}
\omega (t) = \omega_{rad} (t) + \omega_{osc} (t). 
\eqne

We now show that these two parts barely change their support and, furthermore, $\omega_{rad}$ stays almost stationary.
\begin{lemma}\label{locerror}
For sufficiently large  $\lambda$
and $t\in [0,T]$, we have
\eqnb\label{supps_localized}
\begin{split}\supp\, \omega_{osc} &\subset B(0, 6\lambda^{-1}) \setminus B(0, (4\lambda)^{-1}),\\
 \supp\, \omega_{rad} &\subset B(0, 2b\lambda^{-1}) \setminus B(0, a(2\lambda)^{-1})\cup  B(0, 2d\lambda^{-1}) \setminus B(0, c(2\lambda )^{-1}),
 \end{split}
\eqne
and furthermore
\begin{equation}\label{w_decomp}
    \|\omega_{rad}-\overline{{\omega}_{rad}}\|_{L^{2}}\leq \ee^{-\frac{N}{2}}.
\end{equation}
\end{lemma}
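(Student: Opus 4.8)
\emph{Overall strategy.} I would prove the two assertions simultaneously by a continuity (bootstrap) argument on the supports, coupled to an $L^2$ energy estimate for $W\coloneqq\omega_{rad}-\overline{\omega_{rad}}$. First I would record the structural facts I will use. Since $\omega_0$ in \eqref{initial_data} is a sum of a radial function and a multiple of $\cos(N\alpha)$, it is $2\pi/N$-periodic, so $\omega(t)$ is $2\pi/N$-periodic by \eqref{2pi/N-periodicity}; because the flow map $\phi$ of $v[\omega]$ then commutes with rotation by $2\pi/N$ and $\omega_{osc}(\cdot,0)$ is $2\pi/N$-periodic, $\omega_{osc}(t)$ is $2\pi/N$-periodic as well. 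Transport also conserves $L^p$ norms, so $\|\omega_{rad}(t)\|_{L^\infty},\|\overline{\omega_{rad}}\|_{L^\infty}\lesssim\lambda^{1-\beta}$ and $\|\omega_{osc}(t)\|_{L^\infty}\lesssim\lambda^{1-\beta}N^{-\beta}$. I would then let $t^\ast\le T$ be the largest time such that \eqref{supps_localized} holds on $[0,t^\ast]$; since the initial supports are compactly contained in those annuli, $t^\ast>0$, and the goal becomes $t^\ast=T$. On $[0,t^\ast]$ the bootstrap hypothesis gives $|\supp\omega_{rad}(t)|,|\supp W(t)|\lesssim\lambda^{-2}$, that $\supp\omega_{rad}(t)$ is separated from $\supp\omega_{osc}(t)$ by distance $\gtrsim\lambda^{-1}$, and that $\supp\omega_{rad}(t)$ lies in the region where Lemma~\ref{lem_exp_decay} applies to $\omega_{osc}(t)$ (for the last point I use that, by the explicit construction, $b$ can be taken small and $c$ large).

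\emph{Energy estimate for $W$.} Since $\omega_{rad}$ is transported by $v[\omega]=v[\overline{\omega_{rad}}]+v[W]+v[\omega_{osc}]$, $\overline{\omega_{rad}}$ is stationary, and $v[\overline{\omega_{rad}}]$ is purely angular while $\nabla\overline{\omega_{rad}}$ is radial (so $v[\overline{\omega_{rad}}]\cdot\nabla\overline{\omega_{rad}}=0$), $W$ satisfies
\[
\partial_t W+v[\omega]\cdot\nabla W=-\big(v_r[W]+v_r[\omega_{osc}]\big)\,\partial_r\overline{\omega_{rad}}.
\]
Testing against $W$ and using $\div v[\omega]=0$,
\[
\tfrac{\d}{\d t}\tfrac12\|W\|_{L^2}^2\le\|\partial_r\overline{\omega_{rad}}\|_{L^\infty}\,\|W\|_{L^2}\Big(\|v[W]\|_{L^2(\supp\overline{\omega_{rad}})}+\|v_r[\omega_{osc}]\|_{L^2(\supp\overline{\omega_{rad}})}\Big).
\]
I would bound $\|\partial_r\overline{\omega_{rad}}\|_{L^\infty}\lesssim\lambda^{2-\beta}$; $\|v[W]\|_{L^2(\supp\overline{\omega_{rad}})}\lesssim\lambda^{-1}\|W\|_{L^2}$ by Hardy--Littlewood--Sobolev ($L^{4/3}\to L^4$), Hölder, and the measure bounds above; and $\|v_r[\omega_{osc}]\|_{L^\infty(\supp\overline{\omega_{rad}})}\lesssim\lambda^{1-\beta}N^{-\beta}\ee^{-N}$ from Lemma~\ref{lem_exp_decay} (hence $\|v_r[\omega_{osc}]\|_{L^2(\supp\overline{\omega_{rad}})}\lesssim\lambda^{-\beta}N^{-\beta}\ee^{-N}$). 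Dividing by $\|W\|_{L^2}$ and using \eqref{lambda_vs_N} (so $\lambda^{2-2\beta}N^{-\beta}=\lambda^{-\delta}\le1$) gives $\tfrac{\d}{\d t}\|W\|_{L^2}\lesssim\lambda^{1-\beta}\|W\|_{L^2}+\ee^{-N}$; since $W(0)=0$, \eqref{ode_fact} yields $\|W(t)\|_{L^2}\lesssim\lambda^{\beta-1}\ee^{-N}\ee^{C\lambda^{1-\beta}T}$ on $[0,t^\ast]$. Because \eqref{lambda_vs_N} forces $N=\lambda^{(2-2\beta+\delta)/\beta}$ with $(2-2\beta+\delta)/\beta>1-\beta$ (equivalently $(1-\beta)(2-\beta)+\delta>0$, true on $(0,1)$), $N$ beats $\lambda^{1-\beta}$ as $\lambda\to\infty$, so for $\lambda$ large $\lambda^{\beta-1}\ee^{C\lambda^{1-\beta}T}\le\ee^{N/2}$ and \eqref{w_decomp} follows on $[0,t^\ast]$.

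\emph{Closing the bootstrap.} For a Lagrangian trajectory $X(t)$, $\tfrac{\d}{\d t}|X|=v_r[\omega](X)$, and $v_r[\omega]=v_r[W]+v_r[\omega_{osc}]$ since $v_r[\overline{\omega_{rad}}]\equiv0$. On $\supp\omega_{osc}(t)$, Lemma~\ref{linftyN} gives $\|v_r[\omega_{osc}]\|_{L^\infty}\lesssim\lambda^{-\beta}N^{-1-\beta}\log N$, and the separation of supports gives $|v_r[W]|\lesssim\lambda\|W\|_{L^1}\lesssim\ee^{-N/2}$; on $\supp\omega_{rad}(t)$, Lemma~\ref{lem_exp_decay} gives $\|v_r[\omega_{osc}]\|_{L^\infty}\lesssim\lambda^{1-\beta}N^{-\beta}\ee^{-N}$ and the split of the Biot--Savart kernel gives $|v_r[W]|\lesssim(\|W\|_{L^1}\|W\|_{L^\infty})^{1/2}\lesssim\lambda^{-\beta/2}\ee^{-N/4}$. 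In each case the radial displacement accumulated over $[0,T]$ is dominated by $T\lambda^{1-\beta}N^{-1-\beta}\log N$ or $T\cdot(\text{polynomial in }\lambda)\cdot\ee^{-cN}$, both $\ll\lambda^{-1}$ once $\lambda$ is large since $N\to\infty$; hence the supports stay strictly inside the annuli of \eqref{supps_localized} on $[0,t^\ast]$, which forces $t^\ast=T$ by continuity. This gives \eqref{supps_localized}, and with it \eqref{w_decomp}, on all of $[0,T]$.

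\emph{Main obstacle.} The delicate point is the energy estimate. The destabilizing term contributes only a polynomially small factor $\lambda^{1-\beta}\|W\|_{L^2}$ and hence an $\ee^{C\lambda^{1-\beta}T}$ amplification, so for the bound $\|W\|_{L^2}\le\ee^{-N/2}$ to survive, the source term $v_r[\omega_{osc}]\,\partial_r\overline{\omega_{rad}}$ must be made \emph{exponentially} small in $N$. This requires using both the $2\pi/N$-periodicity of $\omega_{osc}$ and the geometric separation of its support from $\supp\overline{\omega_{rad}}$ so that the Paley--Wiener-type estimate of Lemma~\ref{lem_exp_decay} applies, and then checking that the scaling law \eqref{lambda_vs_N} is strong enough for $\ee^{-N}$ to dominate $\ee^{C\lambda^{1-\beta}T}$. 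The whole argument is entangled, since the separation of supports that the energy estimate relies on is itself only guaranteed by the bootstrap hypothesis, which is in turn closed using \eqref{w_decomp}.
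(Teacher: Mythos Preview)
Your proposal is correct and follows essentially the same approach as the paper: a continuity argument on the supports, combined with an $L^2$ energy estimate for $W$ whose source term is made exponentially small via the Paley--Wiener-type bound of Lemma~\ref{lem_exp_decay}, with the exponential $\ee^{-N}$ beating the amplification $\ee^{C\lambda^{1-\beta}T}$ thanks to \eqref{lambda_vs_N}. The only organizational difference is that the paper decouples the two parts --- it first closes the support control by applying Lemma~\ref{linftyN} directly to the full $2\pi/N$-periodic vorticity $\omega$ (so no decomposition $v_r[\omega]=v_r[W]+v_r[\omega_{osc}]$ is needed and the bootstrap on supports is independent of the $W$ estimate), and only afterwards derives \eqref{w_decomp}; your version intertwines them, using \eqref{w_decomp} on $[0,t^\ast]$ to help bound the radial drift, which is slightly more involved but equally valid.
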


\begin{proof}
We first note that the claim of the lemma is valid at least for small times.
In order to obtain the localization \eqref{supps_localized} we note that $\omega$ remains $2\pi/N$-periodic for all times, and so we can use Lemma~\ref{linftyN} to deduce that for $t$ such that (\ref{supps_localized}) is satisfied, we have, for $r\in B(0, 2b\lambda^{-1})$ 
$$|v_{r}[w](r)|\leq C (N\lambda)^{-\beta}\log N $$
and, using the relationship \eqref{lambda_vs_N} between $N$ and $\lambda$,
$$|v_{r}[w](r)|\leq C \lambda^{-2+\beta-\delta}\log N.$$
Thus \eqref{supps_localized} remains valid at least for $t\in [0, C \lambda^{1-\beta +\delta } (\log \lambda )^{-1}]$, and so taking $\lambda $ large ensures that \eqref{supps_localized} holds until $T$.\\

As for \eqref{w_decomp}, we note that, since $a \leq 10^{-4}$, $d\geq 10^{3}$, the assumption of Lemma~\ref{lem_exp_decay} holds  and $r\sim 1$ on $\supp\,\omega_{rad}$, and thus
\[
\| v_r [\omega_{osc}] \|_{L^\infty (\supp\, \omega_{rad})} \lec \| \omega_{osc} \|_{L^\infty } \ee^{-N} \leq \| \omega_{osc,0}  \|_{L^\infty} \ee^{-N}.
\]

Moreover, since
$$\p_t  \omega_{rad}+v(\omega_{rad})\cdot\nabla \omega_{rad}+ v(\omega_{osc})\cdot\nabla (\omega_{rad})=0$$
letting $W\coloneqq \omega_{rad}-\overline{{\omega}_{rad}}$ we see that 

$$\p_t W+ v[W]\cdot\nabla W+ v[W]\cdot \nabla \overline{{\omega}_{rad}} +v[\overline{{\omega}_{rad}}]\cdot\nabla W+ v[\omega_{osc}]\nabla(W+\overline{{\omega}_{rad}})=0,$$
 where we have also made the observation that $v[\overline{{\omega}_{rad}}] \cdot \nabla \overline{{\omega}_{rad}} =0$. This  gives us an evolution inequality for the $L^2$ norm

\[\begin{split}
\frac{\d }{\d t}\|W\|_{L^{2}}&\lec \|v[W]\|_{L^{2}}\lambda^{2-\beta}+\ee^{-N} \frac{\lambda^{3-2\beta}}{N^{\beta}}\lec  \|W \|_{L^{2}}\lambda^{1-\beta}+\ee^{- N} \frac{\lambda^{3-2\beta}}{N^{\beta}} ,
\end{split}
\]
where we used that $\|v[W]\|_{L^{2}}\leq \frac{C}{\lambda} \|W\|_{L^{2}}$, since $W$ is supported in a~disc of radius $\frac{2b}{\lambda}$. In light of the ODE fact \eqref{ode_fact}, this gives 
\eqnb\label{wout_L2}
\| W (t) \|_{L^2} \leq C\ee^{\lambda^{1-\beta}t- N}\frac{\lambda^{2-\beta}}{N^{\beta}}
\eqne
which proves the second claim by taking $\lambda$ large.
\end{proof}
Having obtained the decomposition $\omega = \omega_{rad}+\omega_{osc}$ we will use the notation
\eqnb\label{notation_omega_errors}
\begin{split}
\omega_{rad,err} &\coloneqq \omega_{rad} - \overline{\omega_{rad}},\\
\omega_{osc,err} &\coloneqq \omega_{osc} - \overline{\omega_{osc}}.
\end{split}
\eqne

\subsection{Step 2 - $L^\infty $ control of $\nabla \omega_{osc}$} \label{sec_step2}
The $H^{s}$ growth for our solutions will come from the effect of the velocity generated by $\omega_{rad}$ acting on $\omega_{osc}$. However, we need to prove that this effect is not overpowered by the velocity generated by $\omega_{osc}$. For that, we have the following lemma.

\begin{lemma}\label{winc1}
For sufficiently large $\lambda$
\eqnb\label{win_C1}
\| \omega_{osc} (t) \|_{C^1 } \leq  \lambda^{2-\beta } N^{1-\beta } \exp( C \lambda^{1-\beta }  ) \qquad \text{ for } t\in [0,T].
\eqne
\end{lemma}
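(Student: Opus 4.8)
\textbf{Proof proposal for Lemma~\ref{winc1}.} The plan is to run a Grönwall-type argument directly on $\|\omega_{osc}(t)\|_{C^1}$, exploiting that $\omega_{osc}$ is transported by the full velocity $v[\omega] = v[\omega_{rad}] + v[\omega_{osc}]$, and that the contributions of $\omega_{rad}$ and $\omega_{osc}$ to the relevant velocity gradients can both be bounded in terms of data-sized quantities plus a logarithm of $\|\omega_{osc}\|_{C^1}$ itself. Since $\omega_{osc}$ satisfies a transport equation $\partial_t \omega_{osc} + v[\omega]\cdot\nabla\omega_{osc} = 0$, differentiating in space gives $\partial_t \nabla\omega_{osc} + v[\omega]\cdot\nabla(\nabla\omega_{osc}) = -(\nabla v[\omega])\nabla\omega_{osc}$, and (working with $\|\nabla\omega_{osc}\|_{L^p}$ and letting $p\to\infty$, exactly as in the derivation of \eqref{C2_est_cons}) one obtains
\[
\frac{\d}{\d t}\|\omega_{osc}(t)\|_{C^1} \lec \|\nabla v[\omega](t)\|_{L^\infty}\,\|\omega_{osc}(t)\|_{C^1}.
\]

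Next I would bound $\|\nabla v[\omega]\|_{L^\infty}$. Split $\omega = \omega_{rad} + \omega_{osc}$. For the radial part, $\omega_{rad}$ stays bounded in $C^1$ by a quantity of size $\lambda^{2-\beta}$ (it is essentially frozen, being a small perturbation of $\overline{\omega_{rad}} = \lambda^{1-\beta}f(\lambda r)$ whose $C^1$ norm is $O(\lambda^{2-\beta})$, with the error controlled via Lemma~\ref{locerror} together with the transport structure); applying \eqref{v_C1_est} with $R = O(\lambda^{-1})$ gives $\|\nabla v[\omega_{rad}]\|_{L^\infty} \lec \lambda^{1-\beta}\log(2+\lambda) \lec \lambda^{1-\beta}$ up to the logarithmic factor, which is why a constant $C$ appears in the exponent. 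For the oscillatory part, apply \eqref{v_C1_est} again with $\omega = \omega_{osc}$, $\supp\,\omega_{osc}\subset B(0,6\lambda^{-1})$ from \eqref{supps_localized}, and $\|\omega_{osc}\|_{L^\infty}\le \|\omega_{osc,0}\|_{L^\infty} = \lambda^{1-\beta}N^{-\beta}$, giving
\[
\|\nabla v[\omega_{osc}]\|_{L^\infty} \lec \lambda^{1-\beta}N^{-\beta}\bigl(1 + \log(1+\|\omega_{osc}\|_{C^1})\bigr) \lec \lambda^{1-\beta}\bigl(1+\log(1+\|\omega_{osc}\|_{C^1})\bigr).
\]
Combining, $\frac{\d}{\d t}\|\omega_{osc}\|_{C^1} \lec \lambda^{1-\beta}\|\omega_{osc}\|_{C^1}\bigl(1+\log(1+\|\omega_{osc}\|_{C^1})\bigr)$.

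Finally I would integrate this differential inequality. Set $y(t) := 1 + \log(1+\|\omega_{osc}(t)\|_{C^1})$; then $y'(t) \lec \lambda^{1-\beta} y(t)$, so $y(t) \le y(0)\,\ee^{C\lambda^{1-\beta}t}$, hence
\[
\|\omega_{osc}(t)\|_{C^1} \le \exp\bigl(y(0)\,\ee^{C\lambda^{1-\beta}T}\bigr)
\]
with $y(0) = 1 + \log(1+\|\omega_{osc,0}\|_{C^1})$. Since $\|\omega_{osc,0}\|_{C^1} = \|\lambda^{1-\beta}g(\lambda r)N^{-\beta}\cos(N\alpha)\|_{C^1} \lec \lambda^{2-\beta}N^{1-\beta}$ (the worst term being the $\alpha$-derivative producing the factor $N$, and the $r$-derivative producing $\lambda$), we get $y(0) \lec \log(\lambda^{2-\beta}N^{1-\beta})\lec \log\lambda$ (using \eqref{lambda_vs_N}), and absorbing the prefactor into the exponential for $\lambda$ large yields
\[
\|\omega_{osc}(t)\|_{C^1} \le \lambda^{2-\beta}N^{1-\beta}\exp(C\lambda^{1-\beta}) \qquad \text{for } t\in[0,T],
\]
perhaps after adjusting $C$. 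The main obstacle I anticipate is the rigorous justification of the $p\to\infty$ limiting argument for the $C^1$ evolution inequality in the setting where $\omega_{osc}$ is only a transported piece of $\omega$ rather than a solution in its own right (so it is not itself smooth a priori unless one first establishes enough regularity of the full solution $\omega$), and the careful bookkeeping to confirm that the $C^1$ bound on $\omega_{rad}$ really is $O(\lambda^{2-\beta})$ uniformly on $[0,T]$ rather than something that itself grows; both are handled by the localization and the exponential-in-$\lambda^{1-\beta}$ slack already present in the statement.
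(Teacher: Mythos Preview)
Your overall strategy---derive a differential inequality for $\|\omega_{osc}\|_{C^1}$ from the transport structure, split $v[\omega]=v[\omega_{rad}]+v[\omega_{osc}]$, and bound the latter via the log-Lipschitz estimate \eqref{v_C1_est}---is exactly the paper's. But two steps go wrong.

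\emph{First}, your treatment of $v[\omega_{rad}]$ relies on $\|\omega_{rad}(t)\|_{C^1}\lec\lambda^{2-\beta}$ for all $t\in[0,T]$, which you do not establish (Lemma~\ref{locerror} only gives $L^2$ control of $\omega_{rad}-\overline{\omega_{rad}}$). Since $\omega_{rad}$ is transported by the full $v[\omega]$, controlling its $C^1$ norm requires $\|\nabla v[\omega_{osc}]\|_{L^\infty}$ on $\supp\,\omega_{rad}$---the very quantity you are trying to bound---so the argument is circular as written. The paper sidesteps this entirely: since $\supp\,\omega_{rad}$ and $\supp\,\omega_{osc}$ are separated by $\gtrsim\lambda^{-1}$ (from \eqref{supps_localized}), one gets directly
\[
\|v[\omega_{rad}]\|_{W^{1,\infty}(\supp\,\omega_{osc})}\lec \mathrm{dist}(\supp\,\omega_{rad},\supp\,\omega_{osc})^{-2}\|\omega_{rad}\|_{L^1}\lec\lambda^{1-\beta},
\]
using only the conserved $L^1$ norm.

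\emph{Second}, and more seriously, your final integration is incorrect. After dropping the factor $N^{-\beta}$ you obtain $y'\lec\lambda^{1-\beta}y$ for $y=1+\log(1+\|\omega_{osc}\|_{C^1})$, giving $y(t)\le y(0)\ee^{C\lambda^{1-\beta}T}$ and hence
\[
\|\omega_{osc}(t)\|_{C^1}\le \exp\bigl(y(0)\,\ee^{C\lambda^{1-\beta}T}\bigr)\sim \lambda^{\,\ee^{C\lambda^{1-\beta}T}},
\]
a genuine double exponential in $\lambda^{1-\beta}$. This cannot be ``absorbed'' into $\lambda^{2-\beta}N^{1-\beta}\ee^{C\lambda^{1-\beta}}$, whose logarithm is only $O(\lambda^{1-\beta})$. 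The paper instead runs a continuity/bootstrap argument: assume \eqref{win_C1} on $[0,t_0]$, so that $\log\|\omega_{osc}\|_{C^1}\lec\lambda^{1-\beta}$; then $\lambda^{1-\beta}N^{-\beta}\log\|\omega_{osc}\|_{C^1}\lec\lambda^{2-2\beta}N^{-\beta}=\lambda^{-\delta}\le C$ (using \eqref{lambda_vs_N}), so the differential inequality becomes linear, $\frac{\d}{\d t}\|\omega_{osc}\|_{C^1}\le C\lambda^{1-\beta}\|\omega_{osc}\|_{C^1}$, which integrates to the claimed bound and closes the bootstrap. Alternatively, had you retained the $N^{-\beta}$ factor, the resulting linear ODE for $\log\|\omega_{osc}\|_{C^1}$ has rate $\lambda^{1-\beta}N^{-\beta}=\lambda^{-1+\beta-\delta}\to 0$, and integrating directly also recovers the single-exponential bound.
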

\begin{proof}
We first note that for any $C>1$  \eqref{win_C1} holds for some short time interval, say for $t\in [0,t_0 ]$.  Moreover, observe that, for $t\in[0,t_{0}],$  
$$\p_t \omega_{osc} + v[\omega_{osc}]\cdot\nabla \omega_{osc}+ v[\omega_{rad} ] \cdot\nabla \omega_{osc}=0,$$
and so, since the localization \eqref{supps_localized} of the radial and oscillatory parts of $\omega$ implies that
\[
\| v [\omega_{rad} ] \|_{W^{1,\infty } (\supp\,\omega_{osc} )} \lec  \mathrm{dist} (\supp\, \omega_{rad} , \supp\, \omega_{osc} )^{-2} \| \omega_{rad} \|_{L^1} \lec \lambda^{1-\beta},
\]
we obtain
\begin{align*}
    \frac{\d \|\omega_{osc}\|_{C^1}}{\d  t}&\leq C(\|v[\omega_{osc}]\|_{C^1}+\lambda^{1-\beta})\|\omega_{osc}\|_{C^1}\\
    &\leq C\left( \|\omega_{osc}\|_{L^{\infty}}\log \|\omega_{osc}\|_{{C^1}}+\lambda^{1-\beta}\right) \|\omega_{osc}\|_{{C^1}}\\
        &\leq C\left( \frac{\lambda^{1-\beta}}{N^{\beta}}\log \|\omega_{osc}\|_{{C^1}} +\lambda^{1-\beta}\right)\|\omega_{osc}\|_{{C^1}}\\
        &\leq C\lambda^{1-\beta}\|\omega_{osc}\|_{{C^1}},
\end{align*}
where we used the $C^1$ velocity estimate \eqref{v_C1_est} in the second line, the $L^\infty$ conservation of the vorticity in the third line, and the assumed bound of the $C^1$ norm in the last line.  Thus 
\[
\|\omega_{osc}(t)\|_{C^1}\leq \|\omega_{osc}(0)\|_{C^1}\ee^{C\lambda^{1-\beta}t}\leq {\lambda^{2-\beta}}N^{1-\beta}\ee^{C\lambda^{1-\beta}}
\]
for all $t\in [0,t_0]$, and a~continuity argument completes the proof of \eqref{win_C1}. 
\end{proof}

\subsection{Step 3 - $L^2$ control of $\omega_{osc,err}$}\label{sec_step3} In this section we show that 
\[
\overline{\omega_{osc}} (r,\alpha ,t )= \lambda^{1-\beta } g(\lambda r) N^{-\beta }\cos \left( N\left( \alpha-\frac{1}{r}\int_{0}^{t} v_{\alpha}\left[ A\omega \right] \d s\right) \right)
\]
(recall \eqref{pseudosol_defs}),
is actually a~good approximation (in $L^{2}$) of $\omega_{osc}$. Note that the pseudosolution $\overline{\omega_{osc}}$  corresponds to $\omega_{osc}$ advected with an averaged velocity, i.e.
\[
\p_t \overline{{\omega}_{osc}}+ v[A{\omega_{osc}}]\cdot\nabla \overline{{\omega}_{osc}}+ v[A{\omega_{rad}}]\cdot\nabla \overline{{\omega}_{osc}}=0.
\]
We now use this fact to obtain the following.

\begin{lemma}\label{winerror}
For $\lambda$ big enough we have
\eqnb\label{inner_error}
\|\omega_{osc,err} (t)\|_{L^{2}}\leq C\frac{1}{(\lambda N)^{\beta}}\frac{\lambda^{2-2\beta}\log N}{N^{\beta}}\leq C\|\overline{{\omega}_{osc}}(t)\|_{L^{2}}\lambda^{-\frac{\delta}{2}}
\eqne
for all $t\in [0,T]$.
\end{lemma}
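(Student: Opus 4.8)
The plan is to compare the Lagrangian trajectories of $\omega_{osc}$ and of the pseudosolution $\overline{\omega_{osc}}$, exploiting the fact that the pseudosolution is advected by the $\alpha$-averaged velocity field $v[A\omega] = v[A\omega_{osc}] + v[A\omega_{rad}]$, whose radial component $v_r[A\omega_{rad}]$ vanishes identically (it is purely angular, being generated by a radial vorticity) and whose remaining contribution $v_r[A\omega_{osc}]$ vanishes as well since $A\omega_{osc}$ is radial. Consequently, along the flow of $\overline{\omega_{osc}}$ the radius $r$ is exactly preserved, which is precisely why $\overline{\omega_{osc}}$ keeps the clean form in \eqref{pseudosol_defs}: each particle starting at radius $r_0$ stays at $r_0$ and its angle is shifted by $-\frac1{r_0}\int_0^t v_\alpha[A\omega]\,\d s$.

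First I would set up the two flow maps: $\phi_t$ generated by the full velocity $v[\omega] = v[\omega_{osc}]+v[\omega_{rad}]$ (so $\omega_{osc}(t) = \omega_{osc,0}\circ \phi_t^{-1}$), and $\overline{\phi}_t$ generated by the averaged velocity $v[A\omega_{osc}]+v[A\omega_{rad}]$ (so $\overline{\omega_{osc}}(t) = \omega_{osc,0}\circ \overline{\phi}_t^{-1}$). Since $\omega_{osc,0} = \lambda^{1-\beta}N^{-\beta} g(\lambda r)\cos(N\alpha)$ is Lipschitz with $\|\nabla \omega_{osc,0}\|_{L^\infty} \lesssim \lambda^{2-\beta}N^{1-\beta}$, we get the pointwise bound
\[
|\omega_{osc}(x,t) - \overline{\omega_{osc}}(x,t)| \lesssim \|\nabla \omega_{osc,0}\|_{L^\infty}\, |\phi_t^{-1}(x) - \overline{\phi}_t^{-1}(x)|,
\]
so it suffices to control the separation of the two flows in $L^\infty$, and then multiply by the $L^2$-volume of the (thin, localized) support annulus, which has measure $\lesssim \lambda^{-2}$ by \eqref{supps_localized}. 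The trajectory separation is estimated by Gronwall: writing $X(t) = \phi_t(a)$, $\overline{X}(t) = \overline{\phi}_t(a)$, one has $\frac{\d}{\d t}|X-\overline X| \le |v[\omega](X) - v[A\omega](\overline X)| \le |v[\omega](X) - v[\omega](\overline X)| + |v[\omega](\overline X) - v[A\omega](\overline X)|$. The first term is handled by the Log-Lipschitz estimate \eqref{v_loglip} (with $K$ comparable to a universal constant, since the support is an annulus of fixed aspect ratio) applied to both $\omega_{rad}$ and $\omega_{osc}$; the second term is the key gain: it is the difference between the velocity and its angular average, evaluated on the support of $\overline{\omega_{osc}}$, and its radial part is controlled by the $N^{-1}\log N$ decay from Lemma~\ref{linftyN} (i.e. $\|v_r[\omega_{osc}]\|_{L^\infty}\lesssim \lambda^{-1}\|\omega_{osc}\|_{L^\infty} N^{-1}\log N$, recalling $\omega$ is $2\pi/N$-periodic and $r\sim \lambda^{-1}$ on the support, so the relevant $R$ is $\sim \lambda^{-1}$), together with a corresponding bound on the angular part via the $C^1$ estimate \eqref{win_C1} controlling how far $v_\alpha[\omega_{osc}]$ can deviate from its $\alpha$-average on the scale $1/N$ in $\alpha$. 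The $v_{rad}$ contribution to the radial drift is zero since its radial velocity vanishes, and its contribution to the angular drift is exactly what is absorbed into the definition of $\overline{\omega_{osc}}$ (the integral $\frac1r\int_0^t v_\alpha[A\omega]\,\d s$), which is why only $\omega_{osc}$ versus its average enters the error.

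After the Gronwall step, collecting the constants: the self-interaction log-Lipschitz term gives a factor of the form $\exp(Ct\,\|\omega\|_{L^\infty}) \le \exp(C\lambda^{1-\beta})$ (using $\|\omega\|_{L^\infty}\lesssim \lambda^{1-\beta}$ and $t\le T$ a fixed constant), and the driving term $v - Av$ contributes $\|\omega_{osc}\|_{L^\infty} N^{-1}\log N \cdot \lambda^{-1} \lesssim \lambda^{1-\beta}N^{-\beta}\cdot N^{-1}\log N \cdot \lambda^{-1}$ over time $t \le T$; multiplying by $\|\nabla\omega_{osc,0}\|_{L^\infty}\lesssim \lambda^{2-\beta}N^{1-\beta}$ and the square root of the volume $\lambda^{-1}$ of the support gives, after simplification, the claimed bound $\lesssim (\lambda N)^{-\beta}\lambda^{2-2\beta}N^{-\beta}\log N$. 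The second inequality in \eqref{inner_error} then follows immediately from $\|\overline{\omega_{osc}}\|_{L^2} \sim \lambda^{1-\beta}N^{-\beta}\lambda^{-1} = \lambda^{-\beta}N^{-\beta}$ (the $L^2$ norm of a fixed-profile cosine on a thin annulus, which is conserved by the area-preserving flow $\overline{\phi}_t$), combined with the relation $\lambda^{2-2\beta+\delta} = N^\beta$ from \eqref{lambda_vs_N}, which converts $\lambda^{2-2\beta}N^{-\beta} = \lambda^{2-2\beta-(2-2\beta+\delta)} = \lambda^{-\delta}$, leaving a harmless $\lambda^{-\delta}\log N$ that is $\le \lambda^{-\delta/2}$ for $\lambda$ large. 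The main obstacle I expect is the careful bookkeeping of the angular drift error: one must show that replacing $v_\alpha[\omega_{osc}]$ by its $\alpha$-average $v_\alpha[A\omega_{osc}]$ produces an error in the phase that is genuinely smaller than $1/N$ (so that after multiplication by $N$ in the argument of the cosine it stays $o(1)$), and this requires combining the $C^1$ bound \eqref{win_C1} with the $2\pi/N$-periodicity in just the right way — in particular ensuring that the exponential factor $\exp(C\lambda^{1-\beta})$ coming from Gronwall is still beaten by the $N^{-\beta}$-type smallness, which is exactly where the precise power law $\lambda^{2-2\beta+\delta}=N^\beta$ is needed and why $\delta>0$ cannot be dropped.
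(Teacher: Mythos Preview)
Your overall strategy---compare flow maps, bound the driving term $v[\omega]-v[A\omega]$ via Lemma~\ref{linftyN}, and use the Lipschitz bound on $\omega_{osc,0}$---is correct in spirit, and you correctly identify the angular drift as the delicate point. But the Gronwall step as you describe it has a genuine gap. You write that the log-Lipschitz feedback term ``gives a factor of the form $\exp(Ct\|\omega\|_{L^\infty}) \le \exp(C\lambda^{1-\beta})$'' and that this is ``beaten by the $N^{-\beta}$-type smallness'' via $\lambda^{2-2\beta+\delta}=N^\beta$. This is false: $N^\beta$ is a fixed \emph{polynomial} power of $\lambda$, so $N^{-\beta}=\lambda^{-(2-2\beta+\delta)}$ can never absorb an exponential $\exp(C\lambda^{1-\beta})$ when $\beta<1$. (A genuine log-Lipschitz Osgood estimate is even worse: it gives roughly $(\text{source})^{\exp(-C\lambda^{1-\beta}T)}$, which degenerates to $O(1)$ as $\lambda\to\infty$.) Any Cartesian trajectory-separation inequality with feedback coefficient proportional to $\|\omega\|_{L^\infty}\sim\lambda^{1-\beta}$ (this is the size of $\omega_{rad}$, not $\omega_{osc}$) will incur such a loss, and the lemma's purely polynomial bound would be unattainable.

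The paper avoids any Gronwall feedback by working in polar coordinates and exploiting a structural decoupling. Since $v_r[A\omega]\equiv 0$ one has $\overline\phi_r\equiv r$, and since $v_r[\omega]$ \emph{itself} (not merely its spatial oscillation) is uniformly $O(\lambda^{-\beta}N^{-1-\beta}\log N)$ on the support---by Lemma~\ref{linftyN} for $\omega_{osc}$ and by \eqref{w_decomp} for $\omega_{rad,err}$---the radial separation $|\phi_r-r|$ is bounded \emph{directly} by a time integral, with no feedback. For the angular separation the key is that $v_\alpha[A\omega]$ depends on $r$ alone, so $v_\alpha[A\omega]\circ\overline\phi - v_\alpha[A\omega]\circ\phi$ is controlled by the already-bounded radial separation times $\|v[A\omega]\|_{C^1}$; the latter involves $\|\omega_{osc}\|_{C^1}$ from \eqref{win_C1} only through a \emph{logarithm} via \eqref{v_C1_est}, so the exponential in \eqref{win_C1} becomes $O(\lambda^{1-\beta})$ and remains polynomial. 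The remaining term $v_\alpha[\omega-A\omega]$ is small by the same zero-mean periodicity argument as in \eqref{calc2}. Hence $|\partial_t(\overline\phi_\alpha-\phi_\alpha)|\lesssim \lambda^{2-2\beta}N^{-1-\beta}\log N$ outright, one simply integrates in $t$, and no exponential ever appears. The relation $\lambda^{2-2\beta+\delta}=N^\beta$ is used only for the final \emph{polynomial} step $\lambda^{2-2\beta}N^{-\beta}\log N\le\lambda^{-\delta/2}$, not to defeat an exponential.
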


\begin{proof}
We first define the flow maps between time $s$ and time $t$ (in polar coordinates)
\[
\begin{split}
\p_t \phi (r,\alpha ,s, t) &= (v[\omega ]\circ \phi) (r,\alpha ,s, t),\\
\phi (r,\alpha ,s, s)&=(r\cos(\alpha),r\sin(\alpha)),\\
\p_t \overline{\phi }(r,\alpha ,s, t) &= (v[A \omega ]\circ  \overline{\phi } ) (r,\alpha ,s, t),\\
\overline{\phi} (r,\alpha ,s, s)&=(r\cos(\alpha),r\sin(\alpha)).\\
\end{split}
\]

Note that this definition allows for any $s,t\in \R$, but we will only be concerned with $t\in[0,s]$.

We will denote the polar coordinates of $\phi$ by $\phi_{r},\phi_{\alpha}$ (and analogously for $\overline{\phi}$), so that in particular, when we consider $\omega_{osc}$ in polar coordinates 
$$\omega_{osc}(r,\alpha,s)=\omega_{osc}(\phi_{r}(r,\alpha,s,0),\phi_{\alpha}(r,\alpha,s,0),0),$$ 
$$\overline{\omega_{osc}}(r,\alpha,s)=\omega_{osc}(\overline{\phi}_{r}(r,\alpha,s,0),\overline{\phi}_{\alpha}(r,\alpha,s,0),0).$$

We first note that, since $v[A \omega ]$ has no radial part,
\[
\overline{\phi }_r (r,\alpha ,s, t)=r
\]
for all $t$. On the other hand, for $\phi_r$ we can use the $L^\infty$ estimate \eqref{vr_NlogN} on $v_r[\omega ]$ to obtain, for  $r\in ( (4\lambda )^{-1}, 6\lambda^{-1}) $,
\eqnb\label{calc3}
\begin{split}
&|\phi_{r} (r,\alpha,s, t)-r|\leq \int_s^t \| v_r [\omega ] \|_{L^\infty (B(0,6\lambda^{-1})\setminus B(0,(4\lambda )^{-1}) )}  \\
& \leq \int_s^t \left( \| v_r [\omega_{osc} ] \|_{L^\infty (B(0,6\lambda^{-1})}+\| v_r [\omega_{rad,err} ] \|_{L^\infty (B(0,6\lambda^{-1})\setminus B(0,(4\lambda )^{-1}) )} \right)  \\
& \lec  \lambda^{-\beta} N^{-1-\beta }{\log N} + \| \omega_{rad,err}  \|_{L^2}\\
&\lec \lambda^{-\beta} N^{-1-\beta }{\log N}
\end{split}
\eqne
for all $s,t\in [0,T]$, where we used the fact that  $v_r [\overline{\omega_{rad}} ]=0$ in the second line (recall \eqref{notation_omega_errors} that $\omega_{rad,err}\coloneqq \omega_{rad} - \overline{\omega_{rad}}$) and we used \eqref{w_decomp} in the last line.

As for the angular component we have 
\eqnb\label{angular}
\p_t  (\bar{\phi}_{\alpha}-\phi_{\alpha})(r,\alpha,s,t)=\frac{v_{\alpha}[A \omega - \omega ]\circ \phi}{\phi_{r}(r,\alpha,s,t)} +\frac{v_{\alpha}[A \omega ]\circ \overline{\phi }}{\overline{\phi}_{r}(r,\alpha,s,t)}  -\frac{v_{\alpha}[A \omega ] \circ {\phi }}{\phi_{r}(r,\alpha,s,t)} .
\eqne
Noting that $A \omega -\omega$ has $\alpha$-average zero, we can use Lemma~\ref{lem_v_loglip} in the same way as in \eqref{calc2} to obtain that 
\[\begin{split}
\left|v_{\alpha}[A \omega - \omega ] \right| &\leq \left|v_{\alpha}[A \omega_{osc} - \omega_{osc} ] \right| + \left|v_{\alpha}[A \omega_{rad,err} - \omega_{rad,err} ] \right|    \\
&\lec \| \omega_{osc} \|_{L^\infty} \frac{\log N}{\lambda N} + \|  \omega_{rad,err} \|_{L^2}\\
&\lec \lambda^{-\beta } N^{-1-\beta } {\log N}
\end{split}\]
for $r\in ( (4\lambda )^{-1}, 6\lambda^{-1})$, where we used the fact that $A\overline{\omega_{rad}}-\overline{\omega_{rad}}=0$ in the first line and \eqref{w_decomp} in the last line.

Moreover, since $v_\alpha [A \omega ]$ does not depend on $\alpha$ we have
\[\begin{split}
&|v_{\alpha}[A \omega ] \circ \overline{\phi } -v_{\alpha}[A \omega ] \circ {\phi } |  \leq |v_{\alpha}[A \omega ]( \overline{\phi }_r,0) -v_{\alpha}[A \omega ]( {\phi_r },0) |\\
&\leq \| v[A \omega ]\|_{W^{1,\infty }( B(0,6\lambda^{-1})\setminus B(0,(4\lambda )^{-1}) )} | \phi_r - \overline{\phi}_r |\\
&\lec \left( \| \omega_{osc} \|_{L^{\infty}} \log \| \omega_{osc} \|_{C^1} + \| v [\omega_{rad} ] \|_{W^{1,\infty }( B(0,6\lambda^{-1})\setminus B(0,(4\lambda )^{-1}) )} \right)\frac{\log N}{\lambda^{\beta} N^{1+\beta }}\\
&\lec \left( \lambda^{1-\beta } N^{-\beta } \lambda^{1-\beta+\delta/2} + \lambda^{1-\beta} \right) \lambda^{-\beta} N^{-1-\beta }{\log N}\\
& \lec \lambda^{1-2\beta } N^{-1-\beta } \log N
\end{split}
\]
 for $r\in ( (4\lambda )^{-1}, 6\lambda^{-1}) $, where we used \eqref{v_C1_est} and \eqref{calc3} in the 3rd line, \eqref{w_decomp},\eqref{win_C1} and the support separation \eqref{supps_localized} in the fourth line. 

Finally, we have that, for $r\in(\frac{1}{4\lambda},\frac{6}{\lambda})$ 
\[\begin{split}\frac{v_{\alpha}[A \omega ] \circ {\phi }}{\phi_{r}(r,\alpha)}-\frac{v_{\alpha}[A \omega ] \circ {\phi }}{\overline{\phi}_{r}(r,\alpha)}&\lec \lambda^{2}\lambda^{-\beta}N^{-1-\beta}\lambda^{-\beta} \log N \\
&\lec  \lambda^{2-2\beta} N^{-1-\beta}\log N .
\end{split}
\]
Thus, combining these bounds with \eqref{angular} gives that 
\eqnb\label{angular1}
\left|\p_t  (\overline{\phi}_{\alpha}-\phi_{\alpha}) \right|\lec \lambda^{2-2\beta } N^{-1-\beta } \log N
\eqne
for all $s,t\in [0,T]$, and in particular
$$|\overline{\phi}_{\alpha}(r,\alpha,s,t)-\phi_{\alpha}(r,\alpha,s,t))|\lec  \lambda^{2-2\beta } N^{-1-\beta } \log N.$$
Thus, since 
\[\begin{split}
\omega_{osc}(r,\alpha,s)&=g(\lambda \phi_{r}(r,\alpha,s,0)) \lambda^{1-\beta} N^{-\beta} \cos(N\phi_{\alpha}(r,\alpha,s,0)),\\
\overline{{\omega}_{osc}}(r,\alpha,s)&=g(\lambda r) \lambda^{1-\beta} N^{-\beta} \cos(N\bar{\phi}_{\alpha}(r,\alpha,s,0)),
\end{split}
\]
we can apply both \eqref{calc3} and \eqref{angular1} to obtain
\[
\begin{split}
    |\omega_{osc}-\overline{{\omega}_{osc}}|&\lec \frac{\lambda^{1-\beta}}{N^{\beta}}  N \cdot \lambda^{2-2\beta } N^{-1-\beta } \log N+  \frac{\lambda^{1-\beta}}{N^{\beta}} \lambda \cdot\lambda^{-\beta} N^{-1-\beta }{\log N}\\
    &\lec\frac{\lambda^{1-\beta}}{N^{\beta}}\frac{\lambda^{2-2\beta}\log N}{N^{\beta}}.
\end{split}\]
Integrating over the support of $\omega_{osc}-\overline{\omega_{osc}}$ we get

$$\|\omega_{osc}-\overline{\omega_{osc}}\|_{L^{2}}\lec \|\omega_{osc}\|_{L^2}\frac{\lambda^{2-2\beta}\log N}{N^{\beta}},$$
which proves \eqref{inner_error}, as required.
\end{proof}

\subsection{Step 4 - $H^s$ norm inflation}\label{sec_step4}

Here we finish the proof of Theorem~\ref{illpos}. Namely, we show that for sufficiently large $\lambda$ the only solution to the $2$D Euler equations \eqref{vorticity} with initial conditions $\omega_0$ given by \eqref{initial_data} satisfies 
\eqnb\label{toshow_4.4}
\| \omega \|_{H^{\beta'}} \geq K \qquad \text{ for } t\in[1/T,T].
\eqne

\begin{rem}
Note that since $\|\omega\|_{L^{2}}\leq 1$, if $K>1$ then $\|\omega\|_{H^{s}}\geq K$ for $s\geq \beta'$. Furthermore, note since $\|\omega_0 \|_{H^{\beta}}\leq 1$, independently of $\lambda>0$, we are showing strong ill-posedness in $H^{\beta}$ by considering $\epsilon\omega$ for small $\epsilon>0$. 
\end{rem}

In order to see \eqref{toshow_4.4}, we first recall that the energy conservation and the form \eqref{initial_data} of initial data give
\eqnb\label{osc_l2_final}
\| \omega_{osc} \|_{L^{2}} = \| \overline{\omega_{osc}} \|_{L^2} = \frac{1}{2} \| g \|_{L^{2}} (\lambda N)^{-\beta }.
\eqne

We set $\gamma \coloneqq (\beta_\delta + \beta' )/2$. By interpolation and Lemma~\ref{winerror}
\eqnb\label{osc_error}
\begin{split}
\| \omega_{osc,err} \|_{H^\gamma }^{\beta'} &\lec \| \omega_{osc,err} \|_{L^{2}}^{\beta'-\gamma }  \| \omega_{osc,err} \|_{H^{\beta'}  }^{\gamma } \\
& \lec \| \overline{\omega_{osc}} \|_{L^{2}}^{\beta'-\gamma } \lambda^{-\delta (\beta'-\gamma )/2 }  \left( \| \omega_{osc}\|_{H^{\beta'}} + \| \overline{\omega_{osc}}\|_{H^{\beta'}}  \right)^{\gamma} \\
&\lec   \left( \lambda^{-\beta -\frac{\delta}2 } N^{-\beta } \right)^{\beta'-\gamma } \left( \| \omega_{osc}\|_{H^{\beta'}}^{\gamma }
+ \| \overline{\omega_{osc}}\|_{H^{\beta'}}^{\gamma } \right).    
\end{split}
\eqne

We now show that, for some $\eta>0$,
\eqnb\label{neg_sob}
\|\overline{\omega_{osc}}\|_{\dot{H}^{-\eta}}\lec \|\overline{\omega_{osc}}\|_{L^{2}}(N\lambda^{(2-\beta)})^{-\eta}
\eqne
for $t\in [1/T,T]$. To this end, we first recall the definition \eqref{pseudosol_defs} of $\overline{\omega_{osc}}$,
\[
\overline{\omega_{osc}} =   \lambda^{1-\beta } g(\lambda r) N^{-\beta }\cos \left( N\left( \alpha-\frac{1}{r}\int_{0}^{t}  v_{\alpha}\left[ A\omega \right]  \d s\right) \right).
\]
In order to apply Lemma~\ref{lem_H-delta}, we note that 
\eqnb\label{c1_est0}
\| v [\omega_{osc} ] \|_{L^\infty (B (0,4/\lambda ) )} \lec \| \omega_{osc} \|_{L^\infty} \int_{B(0,8/\lambda )} |y |^{-1} \d y \lec  (N\lambda )^{-\beta } ,
\eqne
and we use  the $C^1$ velocity estimate \eqref{v_C1_est} together with the $C^1$ estimate \eqref{win_C1} of $\omega_{osc}$ to obtain that 
\eqnb\label{c1_est1}
\begin{split}
\| v [\omega_{osc} ] \|_{C^1} &\lec \| \omega_{osc} \|_{L^\infty} \log \| \omega_{osc} \|_{C^1} \\
&\lec \lambda^{1-\beta } N^{-\beta } \lambda^{1-\beta} \log (\lambda^{2-\beta}N^{1-\beta}) \leq \lambda^{1-\beta-\delta}, 
\end{split}
\eqne
recall \eqref{lambda_vs_N} for the relation between $\lambda$ and $N$.
Moreover, since the supports of $\omega_{rad,err}$ and $g(\lambda \cdot )$ are at least $C/\lambda$ apart (recall \eqref{supps_localized}), we can use \eqref{w_decomp} to obtain
\eqnb\label{c1_est2}
\left| v[\omega_{rad,err}] \right|+\left| \nabla  v[\omega_{rad,err}] \right| \lec  (1+\lambda )   \| \omega_{rad,err} \|_{L^2} \leq \lambda^{-\beta-\delta} \quad \text{ in }\supp\, \overline{\omega_{osc}} .
\eqne
Thus, given $t\in [1/T,T]$, letting $W (r),G(r),G_{err}(r) $ be defined by
\[
\begin{split}
W (\lambda r) &\coloneqq \overline{\omega_{osc}} ( r,t),\\
 G(\lambda r) &\coloneqq   (\lambda r)^{-1}  v_{\alpha } [f] ( \lambda r ),\\
  G_{err} (\lambda r) &\coloneqq - N r^{-1} \int_0^t v_{\alpha } [A(\omega_{osc}+\omega_{rad,err})] ( r,s ) \d s ,
\end{split}
\]
we observe that
\[
W = \lambda^{1-\beta } N^{-\beta } g \,\cos \left( N \alpha -Nt \lambda^{1-\beta }  G + G_{err} \right) .
\]
Hence, since $W \in C^2_c ([1/2,4])$ we recall \eqref{c1_est0}--\eqref{c1_est2} to obtain
\[
\| G_{err} \|_{C^1 ([1/2,4])} \leq C N t \lambda^{1-\beta -\delta} \leq (Nt \lambda^{1-\beta })^{1-\sigma } 
\]
for some small $\sigma>0$, where we used the fact that $t\in [1/T,T]$ and $\lambda $ is sufficiently large (and depends on $T$). This lets us use Lemma~\ref{lem_H-delta} to obtain that
\[
\| W \|_{\dot H^{-\eta}} \leq C (N\lambda^{1-\beta } )^{-\eta }\lambda^{1-\beta} N^{-\beta },
\]
where the Sobolev norm is considered on $\R^2$, treating $W$ as a~radial function.
 This, together with  the fact that $\| W \|_{\dot H^{-\eta}} = \lambda^{1+\eta } \| \overline{\omega_{osc} } \|_{\dot H^{-\eta}}$, gives \eqref{neg_sob}, as required.

The upper bound \eqref{neg_sob}, together with the $L^2$ conservation of $\overline{\omega_{osc}}$, let us use Sobolev interpolation (of $L^2$ in terms of $\dot H^{-\eta }$ and $\dot H^s$) to obtain a~lower bound for  $\| \overline{\omega_{osc}} \|_{\dot H^s}$ for $s\in (0,1]$. On the other hand, a~direct calculation shows that  $\|\overline{\omega_{osc}}\|_{\dot H^{1}}\leq CN\lambda^{2-\beta}\|\overline{\omega_{osc}}\|_{L^2}$ for $t\in [1/T,T]$, and so we can interpolate $\dot H^s $ between $L^2$ and $\dot H^1$ to obtain an upper bound on $\| \overline{\omega_{osc}} \|_{\dot H^s}$. Altogether we obtain 
\begin{equation}\label{interapprox}
    \|\overline{\omega_{osc}}\|_{H^{s}}\approx \|\omega_{osc}\|_{L^{2}} (N\lambda^{2-\beta})^{s} \qquad \text{ for each } s\in [0,1].
\end{equation}

Applying interpolation again for $\omega_{osc}$ we obtain
\[
\begin{split}
\| \omega_{osc} \|_{H^{\beta'}}^\gamma &\geq  \frac{\| \omega_{osc} \|_{H^{\gamma } }^{\beta'} }{\| \omega_{osc} \|_{L^{2}}^{\beta'-\gamma }  }\\
& \geq   \frac{1}{\| \omega_{osc} \|_{L^{2}}^{\beta'-\gamma }  }\left( C \| \overline{\omega_{osc}} \|_{H^{\gamma } }^{\beta'}- C\| \omega_{osc,err} \|_{H^{\gamma  } }^{\beta'}  \right)\\
&\geq  C \| \overline{\omega_{osc}} \|_{H^{\beta' } }^{\gamma }  - C\lambda^{-\delta(\beta'-\gamma )/2}  \left(  \| \omega_{osc}\|_{H^{\beta'}}^{\gamma }
+ \| \overline{\omega_{osc}}\|_{H^{\beta' }}^{\gamma   }  \right),
\end{split}
\]
where, in the last inequality, we used \eqref{interapprox} twice (with $s=\gamma$ and with $s=\beta'$) to estimate $\| \overline{\omega_{osc}} \|_{H^\gamma}$, as well as  \eqref{osc_error} to bound $\| \omega_{osc,err} \|_{H^\gamma}$ from above. 
Since the $\| \omega_{osc} \|_{H^{\beta' }}$ norm on the right-hand side can be absorbed by the left-hand side, and the last  $\| \overline{\omega_{osc}}\|_{H^{\beta' }}$ norm is negligible in comparison with the first term on the right-hand side, we thus obtain that
\[
\| \omega_{osc} \|_{H^{\beta'}} \geq C \| \overline{\omega_{osc}} \|_{H^{\beta'}}
\]
for each $t\in [1/T,T]$. Hence, applying \eqref{interapprox} again with $s=\beta'$ we obtain
\eqnb\label{growth_needed}
\|{\omega_{osc}}\|_{H^{\beta'}}\geq C \frac{(N\lambda^{2-\beta})^{\beta'}}{(\lambda N)^{\beta}}\geq C \lambda^{\tilde{\epsilon}},
\eqne
where  $\tilde{\epsilon}>0$ is a~small constant. Thus, choosing sufficiently large $\lambda$ shows growth of $\| \omega_{osc} \|_{H^{\beta'}}$, and hence also of $\| \omega \|_{H^{\beta'}}$, due to the localization \eqref{supps_localized} and  \eqref{ss_consequence}. In particular we obtain \eqref{toshow_4.4}, as required.

\section{Gluing: Loss of regularity}\label{gluing}

Here we prove Theorem~\ref{lossreg}, namely we  show  existence of a~solution that loses regularity instantly, and furthermore it is the unique classical solution (as in Definition~\ref{classol}) and it is global in time. \\

By rescaling the initial data we can assume that $\epsilon = 1$; thus, given $\beta \in (0,1)$, we need to find $\omega (x,0)$ such that there exists a~unique global classical solution to 2D Euler (as in Definition~\ref{classol}) with this initial condition which satisfies
\[
\|\omega(x,0)\|_{H^{\beta}}\leq 1 ,
\]
\eqnb\label{to_show_loss}\|\omega(x,t)\|_{H^{\beta'}}=\infty\quad  \text{ for }\quad  t \in(0,\infty),\ \beta'>\frac{(2-\beta)\beta}{2-\beta^2}.\vspace{1cm}
\eqne

First, given $j\geq 1$, we will denote by $\omega_{j}(x,t)$ a~smooth solution to 2D Euler equation given by Theorem~\ref{illpos} such that

\begin{equation}\label{Ngrowth}
   \|\omega_{j}(x,t)\|_{H^{s}}\geq 4^{j}\ \text{ for }\ s>\frac{(2-\beta)\beta}{2-\beta^2}+\frac{1}{j},\ t\in\left[ \frac{1}{4^{j}},1 \right] . 
\end{equation}
Note also that, by construction, we can choose the $\omega_{j}$ so that, for all $t\in [0,1]$,
\eqnb\label{supp_pieces}
|\supp \,\omega_{j}|\leq 2^{-j},\, \,\supp\, \omega_{j}\subset B(0,1).
\eqne
Moreover, setting  $p\coloneqq  2/(1-\beta )$ we can assume that  $\| \omega_j (\cdot ,t)\|_{L^p}=C $ for all $t\geq 0$, where $C>0$ is a~constant, by the $L^p$ conservation and the form \eqref{initial_data} of the initial data.

We will consider initial conditions $\omega (x,0)$ of the form
\eqnb\label{initial_data_gluing}
\omega(x,0) \coloneqq \sum_{j=1}^{\infty}T_{R_{j}}\bigg(\frac{\omega_{j}(x,0)}{2^{j}}\bigg),
\eqne
where $T_{R}(f(x_{1},x_{2}))=f(x_{1}-R,x_{2}).$ 
For brevity, we will use the notation
\[
\wo_j (x,t) \coloneqq T_{R_j} \left( \frac{\omega_j (x,t/2^j )}{2^j}\right),
\]
where the $R_j$'s remain to be fixed. Some properties to keep in mind are:
\begin{itemize}
    \item $\wo_j$ is a~smooth global solution to 2D Euler with    \eqnb\label{wo_j_inflation}
    \| \wo_j (\cdot , t) \|_{H^s} \geq 2^j \qquad \text{ for }s>\frac{(2-\beta )\beta }{2-\beta^2 } + \frac1j,\,\, t\in [2^{-j},2^j ],
\eqne    
     due to \eqref{Ngrowth}.
    Furthermore, we have
\eqnb\label{omega_lp}
    \begin{split}
    \left\| \wo_j (x,0) \right\|_{H^{\beta}}&\leq 2^{-j} ,\qquad     \left\| \wo_j (x,0)  \right\|_{L^{1}\cap L^p}\leq C 2^{-j},
    \end{split}
    \eqne
    where we set $\| \cdot \|_{L^1\cap L^p } \coloneqq \| \cdot \|_{L^1} + \| \cdot \|_{L^p} $ (recall $p=2/(1-\beta)>2$), as well as 
    \eqnb\label{supp_omega_j_tilde}
    |\supp\, \tilde{\omega}_j | \leq 2^{-j}, \supp\,\tilde{\omega}_j \subset B((R_j,0),1) \qquad \text{ for }t\in [0,2^j].
    \eqne
    \item Given the truncated initial conditions
    \begin{equation}\label{trinitial}
        \sum_{j=1}^{J}\wo_j (x,0) ,
    \end{equation}
        we will refer to the unique global-in-time  solution to the 2D Euler equations \eqref{vorticity}--\eqref{bs_law} with initial condition \eqref{trinitial}  as $\omega_{tr,J}$, and, for any $t\in[0,T]$, $m,J\in\N$, there exists a~constant $C_{m,J,T}$, independent of the choice of $(R_{j})_{j\in\N}$ such that
   \eqnb\label{Hm_bounds_tr_and_wo}
    \|\omega_{tr,J} (\cdot ,t )\|_{H^{m}}\leq C_{m,J,T},\qquad  \left\|\wo_J (\cdot , t) \right\|_{H^{m}}\leq C_{m,J,T}. 
    \eqne
    \item Furthermore, noting that $\| v [f ]\|_{L^\infty } \leq C_q (\| f\|_{L^1}+\| f \|_{L^q})$ for any $f$ and $q>2$ we deduce from \eqref{omega_lp} that
    $$\|v[ \omega_{tr,J}] \|_{L^{\infty}}, \| v[\wo_J]\|_{L^\infty }\leq v_{max}$$
    for all $t\geq 0$, where $v_{max}$ is some constant independent of $J$ and of the choice of $(R_{j})_{j\in\N}$. We also deduce from \eqref{Hm_bounds_tr_and_wo} that
\eqnb\label{vel_decay_pieces}
\begin{split}
| \nabla^k v[\omega_{tr,J}](x,t) | &\leq  \frac{C_{k, J, T}}{(1+\mathrm{dist} \, (x,\supp\, \omega_{tr,J}))^{k+1}},\\
 | \nabla^k v[\wo_J (x,t) ]| &\leq  \frac{C_{k, J, T}}{(1+\mathrm{dist} \, (x,\supp\, \wo_J ))^{k+1}}
 \end{split}
\eqne    
    for all $x\in \R^2$, $t\in [0,T]$.
    
    \item Moreover, 
    \eqnb\label{supp_omega_tr}
    |\supp \, \omega_{tr,j}|\leq 1-2^{-j}\leq 1\qquad \text{ for all }j\geq 1,
    \eqne
   as a~property of the $2$D Euler equations.    
\end{itemize}

We will define $R_{1}=0$, $R_{j+1}=D_{j+1}+D_{j}+R_{j}$ and show that if $D_{j}>0$ are big enough, then there exists a~global solution $\omega_\infty$ with loss of regularity. \\

We first construct $\omega_\infty$ as a~limit of $\omega_{tr,j}$ as $j\to \infty$. To this end, for any fixed $(D_{j})_{j=1,...,J}$, we define inductively the ``$J+1$-th approximation'' by

$$\overline{\omega}_{tr,J+1}\coloneqq \omega_{tr,J}+\wo_{J+1} .$$
 It fulfils an evolution equation of the form

$$\p_t  \overline{\omega}_{tr,J+1}+v[\overline{\omega}_{tr,J+1}]\cdot\nabla \overline{\omega}_{tr,J+1}+F=0,$$
where
$$F\coloneqq -v[\omega_{tr,J}]\cdot\nabla \wo_{J+1}-v\left[ \wo_{J+1}\right] \cdot\nabla\omega_{tr,J},$$
and, since, for $t\in[0,2^{J+1}]$, 
$$\mathrm{dist} \left( \supp \, \omega_{tr,J},\supp\, \wo_{J+1}\right)\geq D_{J+1}-2v_{max}2^{J+1}-2,$$
the $H^m$ boundedness of the vorticity functions \eqref{Hm_bounds_tr_and_wo} and the decay \eqref{vel_decay_pieces} of the corresponding velocity fields $v$ implies that 
\eqnb\label{F_vanishes}
\|F(\cdot , t)\|_{H^{4}} \leq  C_{J} (D_{J+1}-2v_{max}2^{J+1}-2)^{-1} \to 0\qquad \text{ as }D_{J+1} \to \infty,
\eqne
uniformly in  $t\in[0,2^{J+1}]$. \\

We set 
\[
W_{J+1}\coloneqq \omega_{tr,J+1}-\overline{\omega}_{tr,J+1},
\]
and we use  the evolution equation for $W_{J+1}$, 
\[
\p_t W_{J+1} + v[W_{J+1}]\cdot \nabla W_{J+1} + v[W_{J+1}]\cdot \nabla \overline{\omega }_{tr,J+1}+v\left[ \overline{\omega }_{tr,J+1} \right] \cdot \nabla W_{J+1} = F
\]
 to obtain that
$$\frac{\d \|W_{J+1}\|_{H^{4}}}{\d t}\leq C(\|W_{J+1}\|^{2}_{H^4}+\|W_{J+1}\|_{H^4}\|\overline{\omega}_{tr,J+1}\|_{H^{5}}+\|F\|_{H^{4}}),$$
where we used the velocity estimates
 \[
 \begin{split}
 \| v [W_{J+1}] \|_{H^4 (\supp\, \overline{\omega}_{tr,J+1} )} &\lec \| v [W_{J+1}] \|_{L^\infty (\supp\, \overline{\omega}_{tr,J+1} )} +  \|D^4 v [W_{J+1}] \|_{L^2 } \\
 &\leq \| W_{J+1} \|_{H^3}  ,\\
 \| v [\overline{\omega}_{tr,J+1} ] \|_{C^4 (\supp\, W_{J+1} )} &\lec \| \overline{\omega}_{tr,J+1} \|_{H^7},
 \end{split}
 \]
  as well as the facts that $|\supp \, \overline{\omega}_{tr,J+1} |\leq 1$ and $|\supp\, W_{J+1}|\leq 2$, due to \eqref{supp_omega_tr}. 
 Thus, since $W_{J+1} (\cdot ,0)=0$, and $\|\overline{\omega}_{tr,J+1}\|_{H^{7}}\leq \|{\omega}_{tr,J}\|_{H^{7}} +\|\wo_{J+1} \|_{H^{7}}\leq C_J$  for $t\in[0,2^{J+1}]$ (due to \eqref{Hm_bounds_tr_and_wo}) and since $F$ vanishes in the limit $D_{J+1}$ (recall \eqref{F_vanishes}), we can find 
\eqnb\label{DJ}
\widetilde{D_{J+1}}\geq  4^{J+1}(v_{max}+1)+2
\eqne 
such that 
\begin{equation}\label{errorh4}
    \left\|\omega_{tr,J+1}(\cdot ,t)-\omega_{tr,J}(\cdot ,t)-\wo_{J+1}(\cdot ,t) \right\|_{H^{4}}\leq 2^{-J-1}
\end{equation}
for $D_{J+1}\geq \widetilde{D_{J+1}} $ and all $t\in [0,2^{J+1}]$.\\

 Given $a,d>0$ we denote by 
\[
K=\overline{B(0,d)}\times[0,a]
\]
an arbitrary compact set in space-time. Note that, for each such $K$ the support of $\widetilde{\omega}_{J+1}$ is disjoint with $K$ for sufficiently large $J$. Thus \eqref{errorh4} implies that $\{ \omega_{tr,J} \}_{J\geq 1} $ is Cauchy in $C^0_t H^4_x (K)$, and so there exists $\omega_\infty \in C ([0,\infty ) ; H^4_{loc} (\R^2) )$ such that  
\eqnb\label{limit_in_Linfty_H4}
 \|  \omega_{tr,J} - \omega_{\infty} \|_{C^0_t H^4_x (K)}\to 0\qquad \text{ as } J\to \infty 
 \eqne
 for every $K$.  Note that in particular $\omega_{\infty}\in C^0_t C_{x}^{2}(K)$, and so, since $ \omega_\infty \in C^0 ([0,a];L^1 (\R^2 )) $ (a~consequence of \eqref{errorh4} and \eqref{omega_lp}) we see that, for each $K$,  $D^\alpha v [\omega_\infty ]$ exists at each point of  $ K$ and each multiindex $\alpha$ with $|\alpha | \leq 2$, and 
 \eqnb\label{comminfty}
 \begin{split}
    \|  &v[\omega_{\infty} ]-  v[\omega_{tr,J} ] \|_{C^0_t C^2_x (K) }  \\
    & \leq C_{K}\left( \| \chi_{B(0,2d)^c}(\omega_\infty - \omega_{tr,J}) \|_{C^0 ([0,a];L^1 )} + \| \omega_\infty - \omega_{tr,J} \|_{C^0 ([0,a];C^2 ( B(0,2d)))} \right)  \\
    &\leq C_K \sum_{j\geq J } \left\| \chi_{B(0,2d)^c}(\omega_{tr,j+1} - \omega_{tr,j})   \right\|_{C^0 ([0,a];L^1 )} + o(1)\\
   &\leq C_K \sum_{j\geq J } \left( 2^{-j} + \| \wo_{j+1} \|_{C^0 ([0,a];L^1)} \right) + o(1)\\
   &\leq o(1)
\end{split}
\eqne
 as $J\to \infty$, where we used the Biot-Savart law \eqref{bs_law} in the first inequality, \eqref{limit_in_Linfty_H4} in the second inequality, \eqref{errorh4} in the third and \eqref{omega_lp} in the fourth.\\

Having found the limit $\omega_\infty$ with convergence properties \eqref{limit_in_Linfty_H4},  \eqref{comminfty}, we can now take the limit $J\to \infty$ in the weak formulation of $\p_t \omega_{tr,J}+ v[\omega_{tr,J}]\cdot \nabla \omega_{tr,J}=0$ (which is obtained by multiplying by a~smooth function that is compactly supported in $K$, and integrating)  to obtain that $\p_t \omega_\infty = - v[\omega_\infty ]\cdot \nabla \omega_{\infty} \in C^0_t C^1_x (K)$. In particular $ \omega_{\infty } \in C^1_{x,t} (K)$,  which gives that $\omega_\infty$ is a~classical solution of the Euler equations in the sense of Definition~\ref{classol}.\\

We now show that $\omega_\infty$ instantly loses regularity. Namely we show \eqref{to_show_loss}, for which it is sufficient to consider only $s\in \left( \frac{(2-\beta )\beta }{2-\beta^2} , 1\right)$. Given such $s$, and $\tt >0$ we fix $J\geq 1$ such that
\eqnb\label{choice_of_J}
s> \frac{(2-\beta )\beta }{2-\beta^2} +\frac1J \qquad \text{ and } \qquad 2^{J+1}\geq \tt.
\eqne
 Using the short-hand notation
\eqnb\label{shorthand_Bj}
B_j \coloneqq B((R_j,0),D_j),
\eqne
we obtain 
\eqnb\label{inflation_gluing}
\begin{split}
  \|\omega_{\infty}(\cdot,\tt )\|_{H^{s}}&\geq\left\|\sum_{j\geq {J+1}}  \wo_{j} (\cdot ,\tt ) \right\|_{H^{s}} -\|\omega_{tr,J}\|_{H^{s}}\\
  &\hspace{2cm}-\sum_{j\geq J}\left\|\omega_{tr,j+1}(\cdot ,\tt )-\omega_{tr,j}(\cdot ,\tt )-\wo_{j+1} (\cdot ,  \tt ) \right\|_{H^{s}}\\
    &\geq \left\|\sum_{j\geq J+1} \wo_j (\cdot , \tt )  \right\|_{\dot H^s}- C_{s,\tt} \\ 
    &\geq \left\|\wo_k (\cdot , \tt )  \right\|_{\dot H^s}- C_{s,\tt} \geq 2^k - C_{s,\tt }
\end{split}
\eqne
for any $k\geq J+1$, where  we used \eqref{errorh4} in the second inequality, as well as \eqref{ss_consequence} and the fact that $\supp\, \wo_j (\cdot , \tau  ) \subset B((R_j,0),1)$ for all $j\geq J+1$ (recall \eqref{supp_pieces}) in the third inequality.
 Since $k\geq J+1$ is arbitrary, we obtain \eqref{to_show_loss}, as required. \\

In order to show that $\omega_\infty$ is the unique solution in the sense of Definition~\ref{classol}, we first denote by $\phi(x,t)$ the flow map of $\omega_{\infty}$, and we set
\eqnb\label{gluing_uniq_traj}
\begin{split}
\omega_{\infty,j}(x,t)&\coloneqq \wo_{j}(\phi^{-1}(x,t) ,0), \qquad \omega_{\infty,\leq J} \coloneqq \sum_{j=1}^{J} \omega_{\infty,j} .
\end{split}
\eqne
This allows us to decompose $\omega_\infty$ into pieces,
\[
\omega_{\infty}=\sum_{j=1}^{\infty} \omega_{\infty,j},
\]
where each piece satisfies 
\begin{equation}\label{omegajev}
  \p_{t}\omega_{\infty,j}+v[\omega_{\infty}]\cdot \nabla \omega_{\infty,j}=0.
\end{equation}
In particular (recall \eqref{supp_pieces})
\eqnb\label{support_measure_limitj}
| \supp \, \omega_{\infty, j} | \leq 2^{-j}\qquad \text{ for all times }t\geq 0.
\eqne

We now show that, for each fixed $a>0$
\eqnb\label{toshow_winfty_ests}
\|\omega_{\infty,j}\|_{C^1}\leq  \ee^{M_{j}\ee^{\tilde{C}M_{j}a}} \qquad \text{ and }\qquad  \|\omega_{\infty,\leq j}\|_{C^1}\leq  \ee^{S_{j}\ee^{\tilde{C}S_{j}a}}
\eqne
for all $t\in [0,a]$ and all  $j$ such that $2^{j-1}\geq a$, where $\tilde C>1$ is a~universal constant and
\eqnb\label{pieces_c1_bds_uniq}
\begin{split}
M_{j}&\coloneqq  \max \left( 1, \| \wo_j (\cdot , 0)  \|_{C^1} \right) ,\qquad S_{j}\coloneqq \sum_{i=1}^{j}M_{i}.
\end{split}
\eqne

To this end, we first apply the $C^1$ estimate \eqref{C2_est_cons} to $\wo_j$ and $\omega_{tr,j}$, $j\geq 1$, to obtain 
\eqnb\label{temp22}
\|\wo_j \|_{C^1}\leq  \ee^{M_{j}\ee^{\tilde{C}M_{j}a}} \qquad \text{ and }\qquad  \|\omega_{tr,j}\|_{C^1}\leq \ee^{S_{j}\ee^{\tilde{C}S_{j}a}}   
\eqne
for all $j$ and all $t\in [0,a]$, where $\tilde C >1$ is a~constant. Thus, since for $2^{j} \geq a$ 
\eqnb\label{limit_j_remains_supported}
\begin{split}
\omega_{\infty, \leq j}(t)\,\, \text{ and }\,\, \omega_{tr,j} (t) &\text{ remain supported in }B(0,R_j+D_j) , \\
\omega_{\infty,  j}(t)\,\, \text{ and } \,\, \wo_{j} (t) &\text{ remain supported in }B_j 
\end{split}
\eqne
for $t\in [0,a]$ (recall \eqref{shorthand_Bj} and \eqref{DJ}), we obtain that 
\[
\begin{split}
\| \omega_{\infty, j } - \wo_j \|_{H^4 } &= \left\| \omega_\infty - \omega_{tr,j-1} - \sum_{k\geq j} \wo_k \right\|_{H^4(B_j)}\\
&= \left\| \sum_{k\geq j } \left( \omega_{tr,k} - \omega_{tr,k-1}- \wo_{k} \right) \right\|_{H^4 (B_j )}\\
&\leq  \sum_{k\geq j }\left\|\omega_{tr,k}-\omega_{tr,k-1}-\wo_{k}\right\|_{H^{4}}\leq \sum_{k\geq j } 2^{-k} = 2^{-(j-1)}
\end{split}
\] 
for all $t\in [0,a]$ and $j$ such that  $2^{j-1}\geq a$, where we used \eqref{errorh4} in the last line. This and the first claim of \eqref{temp22}
proves the first claim of \eqref{toshow_winfty_ests}, upon possibly taking $\tilde C$ larger. A~similar calculation,
\[
\begin{split}
\| \omega_{\infty, \leq j } - \omega_{tr,j} \|_{H^4 } &= \left\| \omega_\infty - \omega_{tr,j} - \sum_{k\geq j+1} \wo_k \right\|_{H^4(B(0,R_j+D_j) )}\\
&\leq  \sum_{k\geq j+1 }\left\|\omega_{tr,k}-\omega_{tr,k-1}-\wo_{k}\right\|_{H^{4}(B(0,R_j+D_j))}\\
&\leq \sum_{k\geq j+1 } 2^{-k} = 2^{-j}
\end{split}
\] 
for $t\in [0,a]$ and $j$ such that $2^j\geq a$, together with \eqref{temp22} shows the second claim of \eqref{toshow_winfty_ests}, as required.

We emphasize that all of the above claims hold for each choice of the sequence $\{ D_j \}_{j\geq 1}$ satisfying $D_j\geq \widetilde{D_j}$, where $\widetilde{D_J} $ was defined by \eqref{DJ}--\eqref{errorh4}.
We now prove uniqueness of $\omega_\infty$, provided that each $D_j$ is chosen larger, namely that 
\eqnb\label{DJ_actual}
{D_{j}}\geq  \widetilde{D_j} +\exp \left( \exp \left( {2M_{j}\exp \left( \tilde{C}M_{j}2^{j}\right)} \right) \right)  .
\eqne 

Indeed, suppose that there exists another classical solution $\tilde{\omega}_{\infty}$ of the Euler equations with initial data \eqref{initial_data_gluing}, and let $\tilde{\omega}_{\infty,j}$ be defined in the same way as $\omega_{\infty,j}$, but with the flow map given by $\tilde{\omega}_{\infty}$ so that
\begin{equation}\label{tildeomegajev}
    \tilde{\omega}_{\infty}(x,t)=\sum_{j=1}^{\infty} \tilde{\omega}_{\infty,j}(x,t),\quad  \p_{t}\tilde{\omega}_{\infty,j}+v[\tilde{\omega}_{\infty}]\cdot \nabla \tilde{\omega}_{\infty,j}=0.
\end{equation}

 Note that $\tilde{\omega}_{\infty}$ conserves its $L^{p}$ norms with time and in particular, it moves at most with speed $v_{max}$.

We let 
\[
T\coloneqq \sup \{ T'\geq 0 \colon \omega_\infty (t) = \tilde \omega_\infty (t) \quad \text{ for all }t\in [0,T'] \} 
\]
and we set 
\[
 W\coloneqq \omega_{\infty}-\tilde{\omega}_{\infty},  \quad  W_{j}\coloneqq \omega_{\infty,j}-\tilde{\omega}_{\infty,j} \quad \text{ and } \quad W_{\leq j } \coloneqq \sum_{k=1}^j W_k.
\]
Clearly
\eqnb\label{eq_Wj}
\p_t W_j + v[\tilde \omega_\infty ] \cdot \nabla W_j + v[W ]\cdot \nabla \omega_{\infty,j}=0.
\eqne

In order to estimate $v[W ]$ in $L^2$ we fix $j_0$ such that
\[
2^{j_0-1}\geq T+1.
\]
Note that, since $\supp\, W_{\leq j-1} \subset B(0,R_{j-1}+D_{j-1})$ and $\supp\,W_k \subset B_k$ for $k\geq j\geq j_0$ (a~consequence of 
\eqref{limit_j_remains_supported}), and since $|x-y|\geq D_j -2Tv_{max} - 2$ for $x\in B_j$ and $y\in B(0,R_{j-1}+D_{j-1}) \cup \bigcup_{k\geq j+1} B_k$
we have 
\[\begin{split}
\| v[W] \|_{L^2 (\supp \, \omega_{\infty,j} )} &\lec \left( \int_{\supp \, \omega_{\infty,j}} \left( \int_{\supp \, W_j} \frac{|W_j(y)|}{|x-y|} \d y \right)^2 \d x \right)^{1/2} \\
&\hspace{3cm}+ \frac{ \| W \|_{L^2} }{D_j -2(T+1)v_{max} - 2}\\
&\lec \| W_j \|_{L^2} + \frac{ \| W \|_{L^2} }{D_j -2(T+1)v_{max} - 2}
\end{split}
\]
for each $t\in [T,T+1]$, where we used \eqref{support_measure_limitj}, as well as the fact that $1=\chi_{B(y,2)}(x)$ under the first integral and Young's inequality $\| f \ast g\|_2 \leq \| f \|_2 \|g \|_1$ in the last line. Thus multiplying \eqref{eq_Wj} by $W_j$ and integrating we obtain the energy estimate
\eqnb\label{l2_uniq_setup1}
\begin{split}
\frac{\d }{\d t}\|W_{j}\|_{L^{2}}&\leq C \|W_{j}\|_{L^2}\|\omega_{\infty,j}\|_{C^{1}}+\frac{\|W\|_{L^{2}}}{D_{j}-2(T+1) v_{max}-2}\|\omega_{\infty,j}\|_{C^{1}}\\
&\leq C \|W_{j}\|_{L^2}\|\omega_{\infty,j}\|_{C^{1}}+\ee^{-\ee^{M_{j}\ee^{\tilde{C}M_{j}2^{j}}}}F
\end{split}
\eqne
for $t\in [T,T+\epsilon ]$, $j\geq j_0$, where  $\epsilon \in (0,1)$,
\[
U(t) \coloneqq \sup_{s\in [T,t]} \| W(\cdot ,s)\|_{L^2} \quad \text{ for }t\in [T,T+\epsilon ],
\]
and we used the lower bound on $D_j$ \eqref{DJ_actual} (recall also \eqref{DJ}); note that the factor of $2$ in \eqref{DJ_actual} is used to absorb the upper bound \eqref{toshow_winfty_ests} on $\| \omega_{\infty,j } \|_{C^1}$. Thus, using the upper bound \eqref{toshow_winfty_ests} again, the ODE fact \eqref{ode_fact} shows that
\eqnb\label{l2_uniq_temp}
\|W_{j}\|_{L^2}\leq\epsilon \, \ee^{C\epsilon \ee^{M_{j}\ee^{\tilde{C}M_{j}2^{j}}}}\ee^{-\ee^{M_{j}\ee^{\tilde{C}M_{j}2^{j}}}}U
\eqne
for all $t\in [T,T+\epsilon)$, $j\geq j_0$.
Hence, taking $\epsilon \in (0,1)$ small enough so that for each $j\geq j_0$ the product of the two exponential functions above is bounded by $j^{-2}$, we obtain 
\eqnb\label{Wj_final}\|W_{j}\|_{L^2}\leq \frac{\epsilon}{j^{2}}U
\eqne
for $t\in [T,T+\epsilon]$, $j\geq j_0$. 

As for $j<j_0$ we have  
\eqnb\label{l2_uniq_setup2}
\frac{\d }{\d t}\left\|W_{< j_0 } \right\|_{L^{2}}\leq C \left\|W_{< j_0} \right\|_{L^{2}}\|\omega_{< j_{0}}\|_{C^{1}}+\frac{\|W\|_{L^{2}}}{D_{j_{0}}-2(T+1) v_{max}-2}\|\omega_{< j_{0}}\|_{C^{1}},
\eqne
since $\supp\, W_{< j_0 } \subset B(0,R_{j_0}+D_{j_0})$. Thus, applying the ODE fact \eqref{ode_fact} again, we obtain $\| W_{< j_0 } \|_{L^2} \leq C_T \epsilon U$, where $C_T>0$ depends on $T$. Adding this  inequality to \eqref{Wj_final}, for $j\geq j_0$, gives that $\| W \|_{L^2} \leq (C_T+C)\epsilon U$ for all $t\in [T,T+\epsilon]$, and so, taking $\sup$ gives 
\eqnb\label{l2_uniq_concl}
U(T+\epsilon) \leq (C_T+C)\epsilon U(T+\epsilon ).
\eqne
Taking $\epsilon$ sufficiently small, we thus obtain $U(T+\epsilon)=0$, which proves uniqueness, as required.

\section*{Acknowledgements}
This work is supported in part by the Spanish Ministry of Science
and Innovation, through the “Severo Ochoa Programme for Centres of Excellence in R$\&$D (CEX2019-000904-S)” and 114703GB-100. DC and LMZ were partially supported by the ERC Advanced Grant 788250. WO was partially supported by the Simons Foundation.

\bibliographystyle{alpha}

\end{document}